\algrenewcommand{\algorithmicrequire}{\textbf{Input:}}
\algrenewcommand{\algorithmicensure}{\textbf{Output:}}
\newtheorem{theorem}{Theorem}[section]
\newtheorem{corollary}[theorem]{Corollary}
\newtheorem{proposition}[theorem]{Proposition}
\newtheorem{lemma}[theorem]{Lemma}
\theoremstyle{definition}
\newtheorem{definition}[theorem]{Definition}
\newtheorem{example}[theorem]{Example}
\newtheorem{remark}[theorem]{Remark}
\numberwithin{equation}{section}
\renewcommand{\leq}{\leqslant}
\renewcommand{\geq}{\geqslant}
\begin{document}

\title[Finite O-sequences and aCM genera]{A combinatorial description of\\ finite O-sequences and aCM genera}

\author[F.~Cioffi]{Francesca Cioffi}
\address{Francesca Cioffi\\ Universit\`a degli Studi di Napoli \lq\lq Federico II\rq\rq \\ Dipartimento di Matematica e Applicazioni \\ Complesso univ.~di Monte S.~Angelo, Via Cintia \\ 80126 Napoli \\ Italy.}
\email{\href{mailto:francesca.cioffi@unina.it}{francesca.cioffi@unina.it}}
\urladdr{\url{https://www.docenti.unina.it/francesca.cioffi}}

\author[P.~Lella]{Paolo Lella}
\address{Paolo Lella\\ Universit\`a degli Studi di Trento \\ Dipartimento di Matematica \\ Via Sommarive 14 \\ 38123 Povo (Trento) \\ Italy.}
\email{\href{mailto:math@paololella.it}{math@paololella.it}}
\urladdr{\url{http://www.paololella.it/}}

\author[M.~G.~Marinari]{Maria Grazia Marinari}
\address{Maria Grazia Marinari\\ Universit\`a degli Studi di Genova \\ Dipartimento di Matematica\\ Via Dodecaneso 35\\ 16146 Genova \\ Italy.}
\email{\href{mailto:marinari@dima.unige.it}{marinari@dima.unige.it}}
\urladdr{\url{http://www.dima.unige.it/~marinari/}}

\subjclass[2010]{13C14, 14N10, 14H99, 14Q05, 05C20}
\keywords{aCM genus, finite O-sequence, Cohen-Macaulay curve, directed graph, partial order}
\thanks{We thank Margherita Roggero for useful discussions about a previous version of this paper. The first author was partially supported by the PRIN 2010-11  \emph{Geometria delle variet\`a algebriche}, cofinanced by MIUR (Italy), and by GNSAGA. The second author was partially supported by PRIN 2010-11 \emph{Geometria delle variet\`a algebriche}, cofinanced by MIUR (Italy), by FIRB 2012 \emph{Moduli spaces and Applications} and by GNSAGA}

\begin{abstract}
The goal of this paper is to explicitly detect all the arithmetic genera of arithmetically Cohen-Macaulay projective curves with a given degree $d$. 
It is well-known that the arithmetic genus $g$ of a curve $C$ can be easily deduced from the $h$-vector of the curve; in the case where $C$ is arithmetically Cohen-Macaulay of degree $d$, $g$ must belong to the range of integers $\big\{0,\ldots,\binom{d-1}{2}\big\}$. 
We develop an algorithmic procedure that allows one to avoid constructing most of the possible $h$-vectors of $C$. The essential 
tools are a combinatorial description of the finite O-sequences of multiplicity $d$, and a sort of continuity result regarding the generation of the genera. 
The efficiency of our method is supported by computational evidence. As a consequence, we single out the minimal possible Castelnuovo-Mumford regularity of a curve with Cohen-Macaulay postulation and given degree and genus.
\end{abstract}

\maketitle

\section*{Introduction}

In this paper we introduce an algorithmic approach to the search of all possible arithmetic genera of an arithmetically Cohen-Macaulay (aCM for short) projective curve of given degree $d$. 
This problem has been studied in several instances, such as \cite[Example 4.6]{R}, and it has a role in the classification of algebraic curves, see 
for example \cite{H94,Nagel03} and the references therein.

The arithmetic genus $g$ of a curve appears in the constant term of the curve's Hilbert polynomial, hence it is related to the more general study of the coefficients of 
Hilbert polynomials (see \cite{H66} for a geometrical point of view, and \cite{ERV} in the context of local algebra). 

In fact, not only does the $h$-vector encode a lot of information about the geometry of the curve; the arithmetic genus of the curve is also easily deduced from it (\cite[Exercises 8.11 and 8.12]{H10}, \cite[Section 1.4]{Mi}). For an aCM projective scheme the $h$-vector is actually the Hilbert function of its artinian reduction. This result is mainly due to the fundamental 
paper of \cite{Ma} characterizing the Hilbert functions of standard graded algebras. 

We stress the fact that the work of Macaulay does not provide an algorithmic solution for the problem of deciding whether or not an aCM curve of degree $d$ and genus $g$ exists. This 
remark has been the starting point of our paper. By investigating the set of finite O-sequences of multiplicity $d$ and its properties we obtain our solution, 
both computational and theoretical, that relies on some closed formula considerably reducing the amount of real computations. We have not been able to find 
analogous results in literature. 
 
As a first step, we provide a very natural combinatorial description of finite O-sequences, by means of suitable connected graphs, and we obtain 
an efficient search algorithm of the arithmetic genera of Cohen-Macaulay curves (see Algorithm \ref{alg:genusTreeSearch} in Section \ref{sec:combinatorial}). 

Then, for every positive integer $d$, we denote by $R_d = \big[0,\binom{d-1}{2}\big] \cap \mathbb{N}$ the set of integers to which the genus of a Cohen-Macaulay curve of degree $d$ must belong, and we focus our attention on smaller ranges $R_d^s$, consisting of the genera of Cohen-Macaulay curves of degree $d$ and $h$-vector of length $s$. By introducing a convenient 
total ordering on the set of O-sequences of multiplicity $d$ and length $s$, we can single out each range $R_d^s$ (see Corollary \ref{cor:genusOrder}, Theorem \ref{th:degrevlex}, Propositions  \ref{dim procedura paolo}).

The integers in $R_d$ that can not be realized as genus of an aCM curve of degree $d$ are called {\em gaps}. Many of them are located outside every range $R_d^s$, some others lie {\em near} the maximal genus in $R_d^s$, for values of $s$ that can be exactly determined by suitable closed formulas (see Propositions \ref{easy gaps} and \ref{2}).

Finally, we provide an algorithm to compute all the genera of aCM curves for a given degree $d$, avoiding to construct all the corresponding O-sequences (see Algorithm \ref{alg:ACMgenera} in Section \ref{sec:computation}). The strategy supporting this algorithm combines the previous results together with a sort of {\em continuity} in the generation of the genera of aCM curves developed in Lemma \ref{gradi successivi} and applied in Theorem \ref{global continuity}. Experimental computations point out that only a small percentage of integers of $R_d$ needs to be checked by the search algorithm (see Tables \ref{tab:generaDistribution} and \ref{tab:computationTime}).

In Section \ref{sec:CMreg}, we apply our search algorithm to detect the minimal possible Castelnuovo-Mumford regularity of a curve with Cohen-Macaulay postulation and given degree and genus (Proposition \ref{regACM}). Moreover, we answer to a question posed in \cite{CDG} about the Castelnuovo-Mumford regularity of even dimensional projective subschemes having the same Hilbert function of a Cohen-Macaulay projective scheme (Example \ref{even aCM}).


\section{Generalities on O-sequences and aCM genera}

In this section, we state some notation and recall some basic results on O-sequences, referring to \cite{BH} and \cite{Va}. 
  
Given two positive integers $a$, $t$, the {\em binomial expansion of $a$ in base $t$} is the unique writing
\begin{equation}\label{expansion} 
 a = \tbinom{k(t)}{t} + \tbinom{k(t-1)}{t-1} + \cdots + \tbinom{k(j)}{j}
\end{equation} 
where $k(t)> k(t-1)>\cdots > k(j)\geq j\geq 1$ with the convention that $\binom{n}{m} = 0$ whenever $n<m$ and $\binom{n}{0}=1$ for every $n\geq 0$. Letting 
\[
a^{\langle t\rangle}:=\tbinom{k(t)+1}{t+1} + \tbinom{k(t-1)+1}{t} + \cdots + \tbinom{k(j)+1}{j+1},
\]
by an easy computation, one gets $(a+1)^{\langle t\rangle} > a^{\langle t\rangle}$.
A numerical function $\mathsf{h}:\mathbb N \rightarrow \mathbb N$ is {\it admissible} or an {\em O-sequence} if $\mathsf{h}(0)=1$ and $\mathsf{h}(t+1)\leq \mathsf{h}(t)^{\langle t\rangle}$ for every $t\geq 1$. 

If $\mathsf{h}$ is an admissible function and $\mathsf{h}(t)=0$ for some $t$, then $\mathsf{h}(t+i)=0$ for every $i>0$, and $\mathsf{h}$ is called a {\em finite or Artinian O-sequence}. For a finite O-sequence $(h_0,\ldots,h_{s-1})$ we assume $h_{s-1}\not=0$. The integer $s$ is the {\em length} of the O-sequence and the integer $e(\mathsf{h}):=\sum_{i=0}^{s-1}h_i$ is its multiplicity. 

It is well known that there is a bijective correspondence between the set of finite O-sequences of multiplicity $d$ and the set of Hilbert functions of a Cohen-Macaulay standard graded algebra of multiplicity $d$ over a field $K$ \cite[Theorem 1.5]{Va}. In fact, all these Hilbert functions can be computed from the finite O-sequences. In particular, if the graded algebra is the ring of regular functions on an aCM curve $C$ (i.e.~a closed subscheme $C\subset \mathbb P_K^n$ of dimension $1$), the Hilbert function $H_C$ of $C$ is the {\em $2$-th integral} of a finite O-sequences $\mathsf{h}=(h_0,h_1,\ldots,h_{s-1})$, i.e.~letting $H_C(0):=H_Z(0):=\mathsf{h}(0)=1$ and $H_Z(t)=H_Z(t-1)+\mathsf{h}(t)$ for every $t>0$, we have
$$H_C(t)=H_C(t-1)+H_Z(t), \text{ for every } t>0.$$
Hence, $\mathsf{h}$ is the so-called {\em $h$-vector} of $C$ and the Hilbert polynomial of $C$ is $p_C(z)=dz+1-g$ where, after an easy computation, we find that the {\em arithmetic genus} of $C$ is
\begin{equation}\label{genus}
g=1+(s-2)d-p(s-2)=\sum_{j=2}^{s-1} (j-1)h_j\geq 0. \qedhere
\end{equation} 
In this situation, we say that $H_C$ is an {\em aCM function} or a {\em Cohen-Macaulay postulation}, $p_C(z)$ is an {\em aCM polynomial} and $g$ is an {\em aCM genus}.

\begin{remark}\label{rem:prime osservazioni}
The following facts are immediate remarks:
\begin{enumerate}[(i)]
\item the arithmetic genus of an aCM curve is non-negative;
\item every positive integer $g$ is the genus of some aCM curve: it is enough to take any O-sequence $(1,h_1,g)$, with $h_1^{\langle 1 \rangle} \geq g$;
\item if $g$ is the arithmetic genus of some aCM curve $C_d$ of degree $d$, then there is also an aCM curve $C_{d+1}$ of degree $d+1$ with the same arithmetic genus $g$; indeed, if $\mathsf{h}=(1, h_1, h_2, \ldots, h_{s-1})$ is the $h$-vector of $C_d$, then the sequence $\mathsf{h}'=(1, h_1+1, h_2, \ldots, h_{s-1})$ is also an O-sequence and is the $h$-vector of a curve $C_{d+1}$ with Hilbert polynomial $(d+1)z+1-g$. Indeed, the multiplicity of the O-sequence $\mathsf{h}'$ is $d+1$ and then we apply formula \eqref{genus}, in which the integer $h_1$ does not occur. From a geometric point of view, this means that $C_{d+1}$ can be obtained as the union of $C_d$ and a line through a point of $C_d$. 
\end{enumerate}
\end{remark}


\section{A combinatorial description of finite O-sequences}\label{sec:combinatorial}

In this section, we consider a natural structure on the set of all finite O-sequences. This structure will entail 
both our search algorithm of the arithmetic genera of Cohen-Macaulay curves, and some useful information about the aCM genera, such as the existence of minimal genera corresponding to O-sequences with given length (and multiplicity).

We let $\mathsf{e}_i$ denote any sequence, of any length, consisting entirely of $0$ except $1$ in the $i$-th position. Moreover, we introduce the following compact notation for some particular sequences:
\[
(1^{\alpha_0},h_{i_1}^{\alpha_1},h_{i_2}^{\alpha_2},\ldots,h_{i_k}^{\alpha_k}) := (\underbrace{1,\ldots,1}_{\alpha_0\text{ times}},\underbrace{h_{i_1},\ldots,h_{i_1}}_{\alpha_1\text{ times}},\ldots,\underbrace{h_{i_k},\ldots,h_{i_k}}_{\alpha_k\text{ times}}).
\]

\begin{definition}
The \emph{O-sequences graph} is the directed graph $\mathcal{G}$ such that:
\begin{itemize}
\item the set of vertices $V(\mathcal{G})$ consists of the finite O-sequences;
\item the set of edges $E(\mathcal{G})$ consists of the pairs $(\mathsf{h},\mathsf{h}') \in V(\mathcal{G})^2$ s.t.~$\mathsf{h}'-\mathsf{h}=\mathsf{e}_i$ for some $i$  (i.e.~$(\mathsf{h},\mathsf{h}')\in E(\mathcal{G})$ if $\mathsf{h}'$ can be obtained from $\mathsf{h}$ by increasing by $1$ its $i$-th entry).
\end{itemize}
An edge $(\mathsf{h},\mathsf{h}')\in E(\mathcal{G})$ from $\mathsf{h}$ to $\mathsf{h}'$ is labeled by $\mathsf{e}_i$ if $\mathsf{h}' - \mathsf{h} = \mathsf{e}_i$. 
\end{definition}

Let us consider the map $g: \mathcal{G} \rightarrow \mathbb{N}$ that associates to each O-sequence the genus of an aCM curve having this O-sequence as $h$-vector. 

\begin{proposition}\label{whyOseqGraph}
The O-sequences graph $\mathcal{G}$ is a rooted connected graph without loops. The root is the O-sequence of multiplicity 1.
\end{proposition}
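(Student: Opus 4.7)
The plan is to address the three assertions — absence of loops, rootedness at $(1)$, and connectedness — via a single induction on multiplicity. First I would observe that every edge $(\mathsf{h},\mathsf{h}')\in E(\mathcal{G})$ strictly increases the multiplicity by exactly $1$, since $\mathsf{h}' = \mathsf{h}+\mathsf{e}_i$. This rules out self-edges immediately and in fact shows that $\mathcal{G}$ is acyclic: along any directed walk the multiplicity strictly increases.

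For rootedness and connectedness it is enough to produce, for every finite O-sequence $\mathsf{h}$ of multiplicity $d>1$, a predecessor $\mathsf{h}''\in V(\mathcal{G})$ with $(\mathsf{h}'',\mathsf{h})\in E(\mathcal{G})$; iterating this backwards yields, after finitely many steps, a directed path from an O-sequence of multiplicity $1$ to $\mathsf{h}$, and the only such O-sequence is $(1)$. I would take $\mathsf{h}''$ to be $\mathsf{h}=(1,h_1,\ldots,h_{s-1})$ with its \emph{last} entry decreased by $1$: if $h_{s-1}\geq 2$, set $\mathsf{h}''=(1,h_1,\ldots,h_{s-2},h_{s-1}-1)$; if $h_{s-1}=1$, truncate to $\mathsf{h}''=(1,h_1,\ldots,h_{s-2})$. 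In either case the edge from $\mathsf{h}''$ to $\mathsf{h}$ is labelled $\mathsf{e}_{s-1}$.

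The only real check is that $\mathsf{h}''$ is itself a finite O-sequence. In the first case every Macaulay inequality for $\mathsf{h}$ is inherited by $\mathsf{h}''$ verbatim except in position $s-1$, where $h_{s-1}-1 < h_{s-1} \leq h_{s-2}^{\langle s-2\rangle}$ suffices. In the truncation case, the inequalities for positions $\leq s-2$ are unchanged; what needs verification is that the truncated sequence still ends in a nonzero entry, and this follows from $1 = h_{s-1}\leq h_{s-2}^{\langle s-2\rangle}$ together with the convention $0^{\langle s-2\rangle}=0$, which forces $h_{s-2}\geq 1$. Thus $\mathsf{h}''\in V(\mathcal{G})$ and the induction closes.

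I expect the one minor obstacle to be the truncation case: namely, making sure that dropping a trailing $1$ does not leave behind an illegitimate sequence with a zero final entry. Everything else is a direct unwinding of the definitions of edge and of finite O-sequence.
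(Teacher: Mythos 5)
Your argument is correct and is essentially the paper's own proof: edges raise the multiplicity by exactly $1$ (hence no loops), and every $\mathsf{h}\neq(1)$ has a predecessor obtained by subtracting $\mathsf{e}_{s-1}$, so iterating backwards reaches the root $(1)$. Your case split (decrement versus truncate) merely unpacks the paper's single statement that $\mathsf{h}-\mathsf{e}_{s-1}$ is admissible, and the truncation worry is harmless since in a finite O-sequence with $h_{s-1}\neq 0$ all earlier entries are automatically nonzero.
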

\begin{proof}
For any $\mathsf{h}=(1,h_1,\ldots,h_{s-1})$, the sequence $\mathsf{h}'=\mathsf{h}-\mathsf{e}_{s-1}$ is admissible so that there is an edge going from $\mathsf{h}'$ to $\mathsf{h}$. Repeating this procedure, we get the length one O-sequence $(1)$ which cannot be the head of any edge, proving that $\mathcal{G}$ is connected. There are no loops as each edge increases the multiplicity by $1$. 
\end{proof}

\begin{remark}
Denoted by $d_{\mathcal{G}}(\mathsf{h})$ the distance of the node $\mathsf{h}$ from the root, we have $d_{\mathcal{G}}(\mathsf{h}) = e(\mathsf{h})-1$.
\end{remark}

We are going to define a subgraph $\mathcal{T}\subset\mathcal{G}$ which will turn out to be a spanning tree. In this way, we can design ad hoc algorithms to visit the tree in order to quickly find the O-sequences with the properties we will look for. The idea for determining $\mathcal{T}$ is the one used in the proof of Proposition \ref{whyOseqGraph}. For each node of $\mathcal{G}$, we consider only the edge coming from the O-sequence obtained lowering by $1$ the value with the greatest index. Indeed, notice that each O-sequence $\mathsf{h}$ (of any length $s$) has a successor in $\mathcal{T}$, as $\mathsf{h}+\mathsf{e}_s$ is always a finite O-sequence, whereas the sequence $\mathsf{h}+\mathsf{e}_{s-1}$ might not be admissible.

\begin{definition}\label{def:tree}
We call \emph{O-sequences tree} the subgraph $\mathcal{T}\subset \mathcal{G}$ such that:
\begin{itemize}
\item $V(\mathcal{T}) = V(\mathcal{G})$;
\item $E(\mathcal{T}) = \left\{
(\mathsf{h},\mathsf{h}') \in E(\mathcal{G})\ \big\vert\  \mathsf{h}'= \mathsf{h}+\mathsf{e}_s  \text{ or } \mathsf{h}'= \mathsf{h}+\mathsf{e}_{s-1}, \text{ if } h_{s-2}^{\langle s-2 \rangle} > h_{s-1}\right\}$.
\end{itemize}
\end{definition}

\begin{figure}[!ht]
\begin{center}
\begin{tikzpicture}[>=latex,line join=bevel,scale=0.58]
 \node (d1) at (285bp,300bp) [inner sep=2pt] {\tiny $(1)$};

  \node (d2_11) at (285bp,240bp) [inner sep=2pt] {\tiny $(1^2)$};
  
  \node (d3_111) at (255bp,180bp) [inner sep=2pt] {\tiny $(1^3)$};
  \node (d3_12) at (315bp,180bp) [inner sep=2pt] {\tiny $(1,2)$};
  
  \node (d4_1111) at (200bp,120bp) [inner sep=2pt] {\tiny $(1^4)$};
  \node (d4_121) at (285bp,120bp) [inner sep=2pt] {\tiny $(1,2,1)$};
  \node (d4_13) at (370bp,120bp) [inner sep=2pt] {\tiny $(1,3)$};
  
  \node (d5_11111) at (135bp,60bp) [inner sep=2pt] {\tiny $(1^5)$};
  \node (d5_1211) at (210bp,60bp) [inner sep=2pt] {\tiny $(1,2,1^2)$};
  \node (d5_122) at (285bp,60bp) [inner sep=2pt] {\tiny $(1,2^2)$};
  \node (d5_131) at (360bp,60bp) [inner sep=2pt] {\tiny $(1,3,1)$};
  \node (d5_14) at (435bp,60bp) [inner sep=2pt] {\tiny $(1,4)$};
  
  \node (d6_111111) at (75bp,0bp) [inner sep=2pt] {\tiny $(1^6)$};
  \node (d6_12111) at (130bp,0bp) [inner sep=2pt] {\tiny $(1,2,1^3)$};
  \node (d6_1221) at (195bp,0bp) [inner sep=2pt] {\tiny $(1,2^2,1)$};
  \node (d6_1311) at (255bp,0bp) [inner sep=2pt] {\tiny $(1,3,1^2)$};
  \node (d6_123) at (315bp,0bp) [inner sep=2pt] {\tiny $(1,2,3)$};
  \node (d6_132) at (385bp,0bp) [inner sep=2pt] {\tiny $(1,3,2)$};
  \node (d6_15) at (495bp,0bp) [inner sep=2pt] {\tiny $(1,5)$};
  \node (d6_141) at (435bp,0bp) [inner sep=2pt] {\tiny $(1,4,1)$};
  
  \node (d7_1111111) at (-20bp,-60bp) [inner sep=2pt] {\tiny $(1^7)$};
  \node (d7_121111) at (25bp,-60bp) [inner sep=2pt] {\tiny $(1,2,1^4)$};
  \node (d7_12211) at (85bp,-60bp) [inner sep=2pt] {\tiny $(1,2^2,1^2)$};
  \node (d7_13111) at (145bp,-60bp) [inner sep=2pt] {\tiny $(1,3,1^3)$};
  \node (d7_1222) at (200bp,-60bp) [inner sep=2pt] {\tiny $(1,2^3)$};  
  \node (d7_1231) at (255bp,-60bp) [inner sep=2pt] {\tiny $(1,2,3,1)$};
  \node (d7_1321) at (315bp,-60bp) [inner sep=2pt] {\tiny $(1,3,2,1)$};
  \node (d7_133) at (435bp,-60bp) [inner sep=2pt] {\tiny $(1,3^2)$};
  \node (d7_1411) at (380bp,-60bp) [inner sep=2pt] {\tiny $(1,4,1^2)$};
  \node (d7_142) at (485bp,-60bp) [inner sep=2pt] {\tiny $(1,4,2)$};
  \node (d7_151) at (540bp,-60bp) [inner sep=2pt] {\tiny $(1,5,1)$};
  \node (d7_16) at (590bp,-60bp) [inner sep=2pt] {\tiny $(1,6)$};

  \draw [->,thin] (d6_111111) -- (d7_1111111);
  \draw [->,thin,dashed] (d6_111111) -- (d7_121111);
  \draw [->,thin] (d6_12111) -- (d7_121111);
  \draw [->,thin,dashed] (d6_12111) -- (d7_12211);
  \draw [->,thin,dashed] (d6_12111) -- (d7_13111);
  \draw [->,thin] (d6_1221) -- (d7_12211);
  \draw [->,thin] (d6_1221) -- (d7_1222);
  \draw [->,thin,dashed] (d6_1221) -- (d7_1231);
  \draw [->,thin,dashed] (d6_1221) -- (d7_1321);
  \draw [->,thin] (d6_123) -- (d7_1231);
  \draw [->,thin,dashed] (d6_123) -- (d7_133);
  \draw [->,thin] (d6_1311) -- (d7_13111);
  \draw [->,thin,dashed] (d6_1311) -- (d7_1321);
  \draw [->,thin,dashed] (d6_1311) -- (d7_1411);
  \draw [->,thin] (d6_132) -- (d7_1321);
  \draw [->,thin] (d6_132) -- (d7_133);
  \draw [->,thin,dashed] (d6_132) -- (d7_142);
  \draw [->,thin] (d6_141) -- (d7_1411);
  \draw [->,thin] (d6_141) -- (d7_142);
  \draw [->,thin,dashed] (d6_141) -- (d7_151);
  \draw [->,thin] (d6_15) -- (d7_151);
  \draw [->,thin] (d6_15) -- (d7_16);
  
  \draw [->,thin] (d5_122) -- (d6_123);
  \draw [->,thin] (d5_131) -- (d6_1311);
  \draw [->,thin,dashed] (d5_1211) -- (d6_1311);
  \draw [->,thin,dashed] (d3_111) -- (d4_121);
  \draw [->,thin,dashed] (d5_131) -- (d6_141);
  \draw [->,thin] (d2_11) -- (d3_111);
  \draw [->,thin] (d4_13) -- (d5_131);
  \draw [->,thin] (d5_131) -- (d6_132);
  \draw [->,thin] (d5_11111) -- (d6_111111);
  \draw [->,thin] (d1) -- (d2_11);
  \draw [->,thin,dashed] (d5_11111) -- (d6_12111);
  \draw [->,thin] (d5_122) -- (d6_1221);
  \draw [->,thin] (d4_121) -- (d5_122);
  \draw [->,thin] (d3_111) -- (d4_1111);
  \draw [->,thin] (d3_12) -- (d4_13);
  \draw [->,thin] (d5_14) -- (d6_15);
  \draw [->,thin,dashed] (d5_1211) -- (d6_1221);
  \draw [->,thin] (d4_1111) -- (d5_11111);
  \draw [->,thin] (d3_12) -- (d4_121);
  \draw [->,thin] (d5_14) -- (d6_141);
  \draw [->,thin] (d4_13) -- (d5_14);
  \draw [->,thin] (d4_121) -- (d5_1211);
  \draw [->,thin,dashed] (d5_122) -- (d6_132);
  \draw [->,thin] (d2_11) -- (d3_12);
  \draw [->,thin,dashed] (d4_121) -- (d5_131);
  \draw [->,thin,dashed] (d4_1111) -- (d5_1211);
  \draw [->,thin] (d5_1211) -- (d6_12111);
\end{tikzpicture}
\end{center}
\caption{\label{fig:OseqTree} The O-sequence graph $\mathcal{G}$ up to multiplicity $7$. The dashed edges are edges of $\mathcal{G}$ that do not belong to the spanning tree $\mathcal{T}$.}
\end{figure}
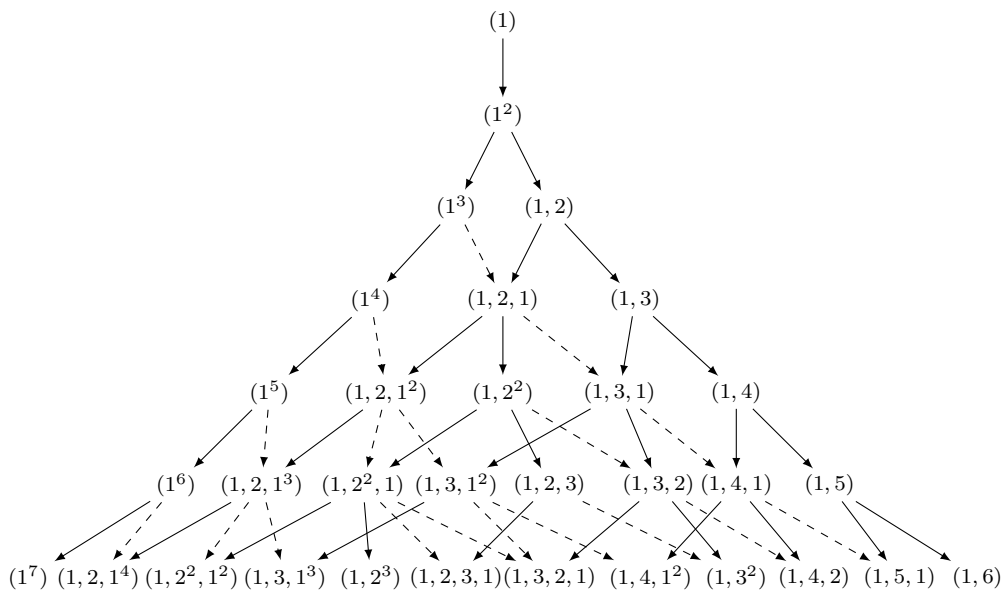

In most situations, we will work with O-sequences with given multiplicity (i.e.~with nodes of $\mathcal{G}$ at the same distance from the root) or with given length. We denote by $\mathcal{G}_d$ the set of O-sequences of multiplicity $d$ and by $\mathcal{G}^s$ the set of O-sequences of length $s$. 

\begin{remark}
As in the spanning tree $\mathcal{T}$ each vertex is the tail of at most 2 edges, we have that $\vert\mathcal{G}_d\vert < 2\vert\mathcal{G}_{d-1}\vert$. Moreover, since $\vert\mathcal{G}_2\vert = 1$, by recursion $\vert\mathcal{G}_d\vert < 2^{d-2}$.
\end{remark}

\begin{proposition}
The subgraph $\mathcal{G}^{s} \subset \mathcal{G}$ is a rooted connected graph with root $(1^s)$ containing a spanning tree $\mathcal{T}^s$ with the same root.
\end{proposition}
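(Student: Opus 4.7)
The plan is to mimic the descent argument of Proposition \ref{whyOseqGraph}, with the extra care that all O-sequences encountered must retain length exactly $s$. Every edge of $\mathcal{G}$ strictly increases the multiplicity by $1$, and among O-sequences of length $s$ the unique one of minimum multiplicity $s$ is $(1^s)$; so to prove simultaneously that $(1^s)$ is the root and that $\mathcal{G}^s$ is connected, it suffices to exhibit, for every $\mathsf{h} \in \mathcal{G}^s$ with $\mathsf{h} \neq (1^s)$, a predecessor $\mathsf{h}' \in \mathcal{G}^s$ with $(\mathsf{h}',\mathsf{h}) \in E(\mathcal{G})$.

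Fix such a vertex $\mathsf{h} = (1,h_1,\ldots,h_{s-1})$ and let $j$ be the largest index in $\{1,\ldots,s-1\}$ with $h_j \geq 2$. Since all entries of $\mathsf{h}$ are positive (length equals $s$) and not all equal to $1$, such a $j$ exists. Set $\mathsf{h}' := \mathsf{h} - \mathsf{e}_j$. Then $\mathsf{h}'$ still has length $s$: if $j < s-1$ then $h'_{s-1} = h_{s-1} \geq 1$, while if $j = s-1$ then $h'_{s-1} = h_{s-1} - 1 \geq 1$ thanks to $h_{s-1} \geq 2$.

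The core step is verifying admissibility of $\mathsf{h}'$. The only O-sequence inequalities possibly affected are the ones at the transitions $j-1 \to j$ (when $j \geq 2$) and $j \to j+1$ (when $j \leq s-2$). The first, $h_j - 1 \leq h_{j-1}^{\langle j-1\rangle}$, is weaker than the corresponding constraint for $\mathsf{h}$. For the second, the maximality of $j$ forces $h_{j+1} = 1$, so the requirement reduces to $(h_j - 1)^{\langle j\rangle} \geq 1$, which holds because $h_j - 1 \geq 1$ and $a^{\langle t\rangle} \geq 1$ for every positive integer $a$. This admissibility check, and the way the maximality of $j$ is used to trivialise the second constraint, is the key technical point: decreasing an internal value of an O-sequence can a priori violate admissibility at the next position, and the careful choice of $j$ is what avoids this.

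Iterating the descent yields a directed path in $\mathcal{G}^s$ from $(1^s)$ to any $\mathsf{h} \in \mathcal{G}^s$, proving rooted connectedness. To extract a spanning tree $\mathcal{T}^s$, I would assign to every non-root vertex $\mathsf{h}$ the single incoming edge $(\mathsf{h}-\mathsf{e}_{j(\mathsf{h})},\mathsf{h})$ constructed above, where $j(\mathsf{h})$ is the maximum index with $h_{j(\mathsf{h})} \geq 2$. Since each selected edge strictly decreases the multiplicity, the resulting subgraph is acyclic, still connects every vertex to the root, and is therefore a spanning tree rooted at $(1^s)$.
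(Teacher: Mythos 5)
Your proposal is correct and follows essentially the same route as the paper: for $\mathsf{h}\neq(1^s)$ you decrement the last entry greater than $1$, obtaining an admissible O-sequence of the same length and multiplicity one less, and then use this canonical predecessor to form the spanning tree. The paper simply asserts the admissibility of $\mathsf{h}'$, while you verify it explicitly; the content is the same.
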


\begin{proof}
We need to show that, for any O-sequence $\mathsf{h} \neq (1^s)$ of length $s$, there exists another O-sequence of the same length with multiplicity $e(\mathsf{h})-1$. If $k = \max\{1\leq i \leq s-1\ \vert\ h_i > 1\}$, then $\mathsf{h} = (1,h_1,\ldots,h_k,1^{s-k-1})$ and $\mathsf{h}'= (1,h_1,\ldots,h_k-1,1^{s-k-1})$ is admissible.
\end{proof}

\begin{remark}
Denoted by $d^s_{\mathcal{G}}(\mathsf{h})$ the distance of the node $\mathsf{h}$ from the root of $\mathcal{G}^s$, we have $d^s_{\mathcal{G}}(\mathsf{h}) = d_{\mathcal{G}}(\mathsf{h}) - (s-1) = e(\mathsf{h})-s$.
\end{remark}

\begin{figure}[!ht]
\begin{center}
\begin{tikzpicture}[>=latex,line join=bevel,scale=0.58]
  \node (d1) at (340bp,300bp) [inner sep=2pt] {\tiny $(1)$};

  \draw [very thin] (290bp,290bp) --node[above]{\small $\mathcal{G}^1$} (350bp,350bp) -- (390bp,310bp) -- (330bp,250bp);
  
  \node (d2_11) at (280bp,240bp) [inner sep=2pt] {\tiny $(1^2)$};
  
   \draw [very thin] (545bp,-70bp) -- (235bp,235bp) --node[above]{\small $\mathcal{G}^2$} (290bp,290bp) -- (600bp,-20bp);

  \node (d3_111) at (220bp,180bp) [inner sep=2pt] {\tiny $(1^3)$};
  \node (d3_12) at (340bp,180bp) [inner sep=2pt] {\tiny $(1,2)$};
  
  \draw [very thin] (380bp,-70bp) -- (170bp,170bp) --node[above]{\small $\mathcal{G}^3$} (235bp,235bp);
  
  \node (d4_1111) at (160bp,120bp) [inner sep=2pt] {\tiny $(1^4)$};
  \node (d4_121) at (285bp,120bp) [inner sep=2pt] {\tiny $(1,2,1)$};
  \node (d4_13) at (400bp,120bp) [inner sep=2pt] {\tiny $(1,3)$};
  
  \draw [very thin] (162bp,-70bp) -- (125bp,125bp) --node[above]{\small $\mathcal{G}^4$} (170bp,170bp);
  
  \node (d5_11111) at (100bp,60bp) [inner sep=2pt] {\tiny $(1^5)$};
  \node (d5_1211) at (190bp,60bp) [inner sep=2pt] {\tiny $(1,2,1^2)$};
  \node (d5_122) at (300bp,60bp) [inner sep=2pt] {\tiny $(1,2^2)$};
  \node (d5_131) at (360bp,60bp) [inner sep=2pt] {\tiny $(1,3,1)$};
  \node (d5_14) at (460bp,60bp) [inner sep=2pt] {\tiny $(1,4)$};
 
   \draw [very thin] (49bp,-70bp) -- (80bp,80bp) --node[above]{\small $\mathcal{G}^5$} (125bp,125bp);
  
  \node (d6_111111) at (40bp,0bp) [inner sep=2pt] {\tiny $(1^6)$};
  \node (d6_12111) at (100bp,0bp) [inner sep=2pt] {\tiny $(1,2,1^3)$};
  \node (d6_1221) at (195bp,0bp) [inner sep=2pt] {\tiny $(1,2^2,1)$};
  \node (d6_1311) at (265bp,0bp) [inner sep=2pt] {\tiny $(1,3,1^2)$};
  \node (d6_123) at (352bp,0bp) [inner sep=2pt] {\tiny $(1,2,3)$};
  \node (d6_132) at (398bp,0bp) [inner sep=2pt] {\tiny $(1,3,2)$};
  \node (d6_141) at (443bp,0bp) [inner sep=2pt] {\tiny $(1,4,1)$};
  \node (d6_15) at (520bp,0bp) [inner sep=2pt] {\tiny $(1,5)$};
  
     \draw [very thin] (-6bp,-70bp) -- (20bp,20bp) --node[above]{\small $\mathcal{G}^6$} (80bp,80bp);

  \node (d7_1111111) at (-20bp,-60bp) [inner sep=2pt] {\tiny $(1^7)$};
  \node (d7_121111) at (24bp,-60bp) [inner sep=2pt] {\tiny $(1,2,1^4)$};
  \node (d7_12211) at (81bp,-60bp) [inner sep=2pt] {\tiny $(1,2^2,1^2)$};
  \node (d7_13111) at (134bp,-60bp) [inner sep=2pt] {\tiny $(1,3,1^3)$};
  \node (d7_1222) at (182bp,-60bp) [inner sep=2pt] {\tiny $(1,2^3)$};  
  \node (d7_1231) at (229bp,-60bp) [inner sep=2pt] {\tiny $(1,2,3,1)$};
  \node (d7_1321) at (285bp,-60bp) [inner sep=2pt] {\tiny $(1,3,2,1)$};
  \node (d7_133) at (400bp,-60bp) [inner sep=2pt] {\tiny $(1,3^2)$};
  \node (d7_1411) at (338bp,-60bp) [inner sep=2pt] {\tiny $(1,4,1^2)$};
  \node (d7_142) at (450bp,-60bp) [inner sep=2pt] {\tiny $(1,4,2)$};
  \node (d7_151) at (500bp,-60bp) [inner sep=2pt] {\tiny $(1,5,1)$};
  \node (d7_16) at (580bp,-60bp) [inner sep=2pt] {\tiny $(1,6)$};
  
   \draw [very thin] (-45bp,-70bp) -- (-15bp,-15bp) --node[above]{\small $\mathcal{G}^7$} (20bp,20bp);

  \draw [->,thick,dotted,black!25] (d6_111111) -- (d7_1111111);
  \draw [->,thin] (d6_111111) -- (d7_121111);
  \draw [->,thick,dotted,black!25] (d6_12111) -- (d7_121111);
  \draw [->,thin] (d6_12111) -- (d7_12211);
  \draw [->,thin] (d6_12111) -- (d7_13111);
  \draw [->,thick,dotted,black!25] (d6_1221) -- (d7_12211);
  \draw [->,thin] (d6_1221) -- (d7_1222);
  \draw [->,thin] (d6_1221) -- (d7_1231);
  \draw [->,thin,dashed] (d6_1221) -- (d7_1321);
  \draw [->,thick,dotted,black!25] (d6_123) -- (d7_1231);
  \draw [->,thin,dashed] (d6_123) -- (d7_133);
  \draw [->,thick,dotted,black!25] (d6_1311) -- (d7_13111);
  \draw [->,thin] (d6_1311) -- (d7_1321);
  \draw [->,thin] (d6_1311) -- (d7_1411);
  \draw [->,thick,dotted,black!25] (d6_132) -- (d7_1321);
  \draw [->,thin] (d6_132) -- (d7_133);
  \draw [->,thin,dashed] (d6_132) -- (d7_142);
  \draw [->,thick,dotted,black!25] (d6_141) -- (d7_1411);
  \draw [->,thin] (d6_141) -- (d7_142);
  \draw [->,thin] (d6_141) -- (d7_151);
  \draw [->,thick,dotted,black!25] (d6_15) -- (d7_151);
  \draw [->,thin] (d6_15) -- (d7_16);

  \draw [->,thin] (d5_122) -- (d6_123);
  \draw [->,thick,dotted,black!25] (d5_131) -- (d6_1311);
  \draw [->,thin] (d5_1211) -- (d6_1311);
  \draw [->,thin] (d3_111) -- (d4_121);
  \draw [->,thin] (d5_131) -- (d6_141);
  \draw [->,thick,dotted,black!25] (d2_11) -- (d3_111);
  \draw [->,thick,dotted,black!25] (d4_13) -- (d5_131);
  \draw [->,thin] (d5_131) -- (d6_132);
  \draw [->,thick,dotted,black!25] (d5_11111) -- (d6_111111);
  \draw [->,thick,dotted,black!25] (d1) -- (d2_11);
  \draw [->,thin] (d5_11111) -- (d6_12111);
  \draw [->,thick,dotted,black!25] (d5_122) -- (d6_1221);
  \draw [->,thin] (d4_121) -- (d5_122);
  \draw [->,thick,dotted,black!25] (d3_111) -- (d4_1111);
  \draw [->,thin] (d3_12) -- (d4_13);
  \draw [->,thin] (d5_14) -- (d6_15);
  \draw [->,thin] (d5_1211) -- (d6_1221);
  \draw [->,thick,dotted,black!25] (d4_1111) -- (d5_11111);
  \draw [->,thick,dotted,black!25] (d3_12) -- (d4_121);
  \draw [->,thick,dotted,black!25] (d5_14) -- (d6_141);
  \draw [->,thin] (d4_13) -- (d5_14);
  \draw [->,thick,dotted,black!25] (d4_121) -- (d5_1211);
  \draw [->,thin,dashed] (d5_122) -- (d6_132);
  \draw [->,thin] (d2_11) -- (d3_12);
  \draw [->,thin] (d4_121) -- (d5_131);
  \draw [->,thin] (d4_1111) -- (d5_1211);
  \draw [->,thick,dotted,black!25] (d5_1211) -- (d6_12111);
\end{tikzpicture}
\caption{\label{fig:OseqGraph2} The subgraphs $\mathcal G^s$ of the O-sequence graph with given length $s$. Along the grey dotted edges the length increases, so such edges of $\mathcal{G}$ do not belong to any subgraph $\mathcal G^s$. The dashed edges are edges of $\mathcal G^s$ that do not belong to the corresponding spanning tree $\mathcal{T}^s$.}
\end{center}
\end{figure}
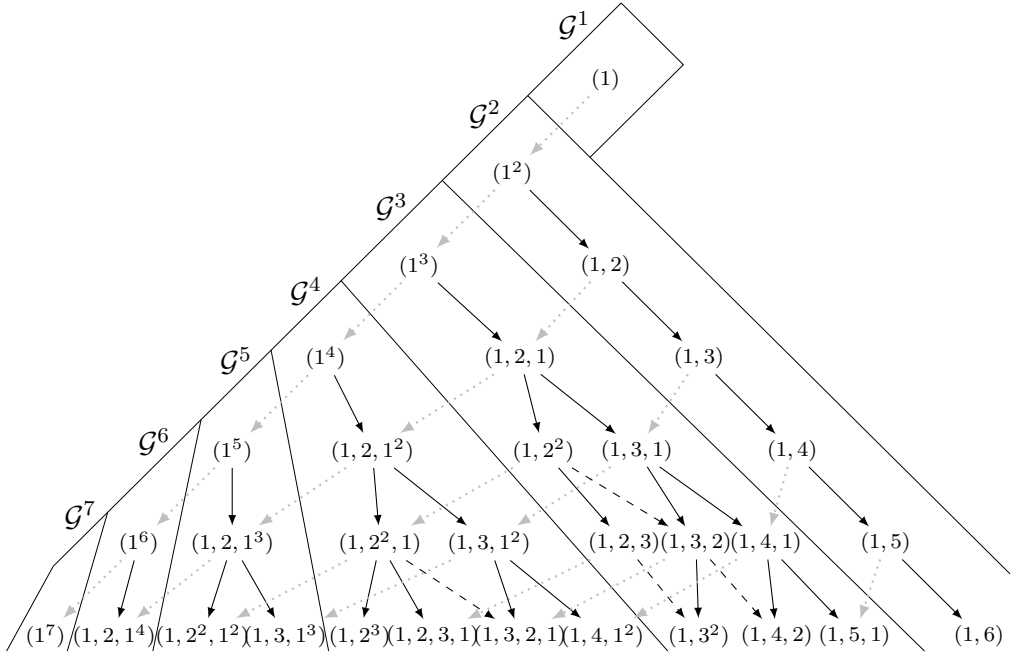

$\mathcal{G}_d$ is not a subgraph of $\mathcal{G}$, as there are no edges of $\mathcal{G}$ between O-sequences with the same multiplicity. But the edges of $\mathcal{G}$ induce the following natural partial order on $\mathcal{G}_d$.

\begin{definition}\label{ordering}
Two O-sequences $\mathsf{h}_1$ and $\mathsf{h}_2$ in $\mathcal{G}_d$ are {\em directly comparable} if there exists $\mathsf{h}_0 \in \mathcal{G}_{d-1}$ such that $\mathsf{h}_1 = \mathsf{h}_0 + \mathsf{e}_i$ and $\mathsf{h}_2 = \mathsf{h}_0 + \mathsf{e}_j$, i.e.~$\mathsf{h}_1 - \mathsf{h}_2 = \mathsf{e}_i-\mathsf{e}_j$. On directly comparable O-sequences we consider the order
\begin{equation}\label{eq:partial ordering}
\mathsf{h}_1 \prec \mathsf{h}_2\quad\Longleftrightarrow\quad i < j
\end{equation}
and denote by $\prec$ also its transitive closure in $\mathcal{G}_d$.
\end{definition}

The partial order $\prec$ gives a natural structure of directed graph to $\mathcal{G}_d$. The edges are all the possible pairs $(\mathsf{h},\mathsf{h}') \in V(\mathcal{G}_d)^2$ such that $\mathsf{h}= \mathsf{h}'+\mathsf{e}_j-\mathsf{e}_i$ and $j>i$ (see Figure \ref{fig:OseqGraph3}). As before, we define a spanning tree of the graph structure of $\mathcal{G}_d$ which allows us to efficiently examine the set of O-sequences with given multiplicity. The same procedure is also extended to the set of O-sequences $\mathcal{G}_d^s$ with given multiplicity $d$ and length $s$. Moreover, we let 
\begin{equation}\label{eq:min}
{\mathsf h}^s(d):=(1,d-s+1,1^{s-2})\quad \text{ and }\quad {\mathsf g}^s(d):=g({\mathsf h}^s(d))=\tbinom{s-1}{2}.
\end{equation}

\begin{proposition}\label{prop:degreeLength}
\begin{enumerate}[(i)]
\item\label{it:degreeD} The graph $\mathcal{G}_d$ contains a spanning tree $\mathcal{T}_d$ with root the O-sequence $(1,d-1)$.
\item\label{it:degreeDlengthS} The subgraph $\mathcal{G}_d^s$ contains a spanning tree $\mathcal{T}^s_d$ with root the O-sequence ${\mathsf h}^s(d)$. Thus, $\mathcal{G}_d^s$ is also connected. 
\end{enumerate}
\end{proposition}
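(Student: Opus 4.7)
The plan is to establish both claims by constructing, for every non-root vertex, a strictly $\prec$-smaller neighbour that is directly comparable to it and still lies in $\mathcal{G}_d$ (respectively $\mathcal{G}_d^s$). Once this is done, the finiteness of these vertex sets together with the fact that $\prec$ is a strict partial order yields a spanning out-arborescence from the proposed root, and in particular the connectedness of $\mathcal{G}_d^s$ announced in (ii).

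For (i), take any $\mathsf{h} = (1, h_1, \ldots, h_{s-1}) \in \mathcal{G}_d$ with $\mathsf{h} \neq (1, d-1)$, which forces $s \geq 3$, and set $\mathsf{h}_0 := \mathsf{h} - \mathsf{e}_{s-1}$. This is an O-sequence of multiplicity $d-1$, since lowering the last nonzero entry never breaks admissibility. The sequence $\mathsf{h}' := \mathsf{h}_0 + \mathsf{e}_1$ is admissible because the inequality $h_2 \leq h_1^{\langle 1 \rangle}$ only strengthens when $h_1$ is increased by $1$, thanks to the strict monotonicity $(a+1)^{\langle t \rangle} > a^{\langle t \rangle}$ recalled at the beginning of Section 1. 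Then $\mathsf{h}$ and $\mathsf{h}'$ are directly comparable through $\mathsf{h}_0$ with indices $1$ and $s-1$, giving $\mathsf{h}' \prec \mathsf{h}$ as required.

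For (ii), the construction above may shorten the length by one (precisely when $h_{s-1} = 1$), so I will replace the index $s-1$ by $k := \max\{i \in \{2, \ldots, s-1\} : h_i \geq 2\}$. This $k$ exists whenever $\mathsf{h} \neq \mathsf{h}^s(d)$: its absence would force $h_i = 1$ for all $i \geq 2$ and then $h_1 = d-s+1$ by the multiplicity condition, making $\mathsf{h} = \mathsf{h}^s(d)$. With such $k$, the sequence $\mathsf{h}_0 := \mathsf{h} - \mathsf{e}_k$ keeps length $s$ because $h_k - 1 \geq 1$ and all entries past $k$ remain $1$; the only new admissibility check is $h_{k+1} \leq (h_k - 1)^{\langle k \rangle}$, which reduces to $1 \leq (h_k - 1)^{\langle k \rangle}$ and holds since any positive integer has $\langle k \rangle$-value at least $1$. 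Then $\mathsf{h}' := \mathsf{h}_0 + \mathsf{e}_1 \in \mathcal{G}_d^s$ satisfies $\mathsf{h}' \prec \mathsf{h}$ via indices $1 < k$. The only real obstacle is the bookkeeping around this admissibility check after lowering $h_k$; everything else is immediate from the strict monotonicity of $a \mapsto a^{\langle t \rangle}$ and the very definition of the root $\mathsf{h}^s(d)$.
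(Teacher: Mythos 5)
Your proof is correct and follows essentially the same route as the paper: for each non-root vertex you take the parent $\mathsf{h}-\mathsf{e}_{s-1}+\mathsf{e}_1$ in $\mathcal{G}_d$, respectively $\mathsf{h}-\mathsf{e}_k+\mathsf{e}_1$ in $\mathcal{G}_d^s$ with $k$ the last entry $\geq 2$, which is exactly the edge the paper puts into $\mathcal{T}_d$ and $\mathcal{T}_d^s$. You merely make explicit the admissibility checks and the finite-descent argument that the paper leaves implicit.
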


\begin{proof}
\emph{(\ref{it:degreeD})} For each vertex $\mathsf{h} \in \mathcal{G}_d\setminus \{(1,d-1)\}$, the spanning tree $\mathcal{T}_d$ contains the edge $\mathsf{e}_{s-1}- \mathsf{e}_1$ going from $\mathsf{h}'=\mathsf{h}-\mathsf{e}_{s-1}+ \mathsf{e}_1$ to $\mathsf{h}$, where $s$ is the length of $\mathsf{h}$.

\emph{(\ref{it:degreeDlengthS})} For each vertex $\mathsf{h} = (1,h_1,\ldots,h_i,1^{d-\sum_{j=0}^i h_j}) \in \mathcal{G}_d^s \setminus \{(1,d-s+1,1^{s-2})\}$ (i.e.~$i>1$), the spanning tree $\mathcal{T}_d^s$ contains the edge $\mathsf{e}_i - \mathsf{e}_1$ going from $\mathsf{h}' = \mathsf{h} - \mathsf{e}_i + \mathsf{e}_{1}$ to $\mathsf{h}$. \end{proof}

\begin{corollary}\label{cor:genusOrder}
The order induced on $\mathcal{G}_d$ by the total order on $\mathbb{N}$ through the map $g:~\mathcal{G}_d \rightarrow \mathbb{N}$ is a refinement of the partial order $\prec$. In particular, ${\mathsf h}^s(d) = \min({\mathcal G}_d^s)$ with respect to $\prec$, ${\mathsf g}^s(d)$ is the minimal genus corresponding to an O-sequence of length $s$ and multiplicity $d$ and it does not depend on $d$.
\end{corollary}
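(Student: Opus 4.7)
The plan is to exploit the fact that the genus formula \eqref{genus} assigns to an increment at position $k$ the weight $\max(0,k-1)$, which is strictly monotone in $k$. So I would first verify monotonicity of $g$ on the covering relation of $\prec$ (direct comparability), then extend to $\prec$ by transitivity, and finally combine with the tree of Proposition \ref{prop:degreeLength}(\ref{it:degreeDlengthS}) to locate the minimum.

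First, given directly comparable $\mathsf{h}_1 \prec \mathsf{h}_2$ in $\mathcal{G}_d$, I write them as $\mathsf{h}_1 = \mathsf{h}_0 + \mathsf{e}_i$ and $\mathsf{h}_2 = \mathsf{h}_0 + \mathsf{e}_j$ with $1 \leq i < j$ (the bound $i \geq 1$ is automatic because $h_0 = 1$ is fixed for every O-sequence, so position $0$ can never be incremented). From \eqref{genus}, the contribution to $g$ of an increment at position $k$ is exactly $\max(0,k-1)$, hence
\[
g(\mathsf{h}_2) - g(\mathsf{h}_1) = \max(0,j-1) - \max(0,i-1) > 0,
\]
since $j \geq 2$ forces the first term to be positive and to strictly dominate the second (which is at most $j-2$). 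As $\prec$ on $\mathcal{G}_d$ is the transitive closure of direct comparability, this propagates to $g(\mathsf{h}_1) < g(\mathsf{h}_2)$ for all $\mathsf{h}_1 \prec \mathsf{h}_2$, i.e.\ the total order pulled back from $\mathbb{N}$ via $g$ refines $\prec$.

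For the second part, I would invoke Proposition \ref{prop:degreeLength}(\ref{it:degreeDlengthS}): it gives a spanning tree $\mathcal{T}_d^s$ of $\mathcal{G}_d^s$ rooted at $\mathsf{h}^s(d)$ whose edges satisfy $\mathsf{h} - \mathsf{h}' = \mathsf{e}_i - \mathsf{e}_1$ with $i > 1$. Each such edge is a direct comparability $\mathsf{h}' \prec \mathsf{h}$, so climbing from the root to an arbitrary node yields $\mathsf{h}^s(d) \preceq \mathsf{h}$ for every $\mathsf{h} \in \mathcal{G}_d^s$, which identifies the $\prec$-minimum. Plugging $\mathsf{h}^s(d) = (1, d-s+1, 1^{s-2})$ into \eqref{genus} gives $g(\mathsf{h}^s(d)) = \sum_{j=2}^{s-1}(j-1) = \binom{s-1}{2}$, and the right-hand side is manifestly independent of $d$.

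The argument is essentially a one-line comparison of weights in the genus formula, so there is no serious obstacle. The only subtlety is to confirm that the inequality in the first step is strict; this follows at once from the automatic bound $i \geq 1$ together with $i < j$, which forces $j \geq 2$ and hence $\max(0,j-1) > \max(0,i-1)$.
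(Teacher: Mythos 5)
Your proof is correct and follows essentially the same route as the paper: monotonicity of $g$ along directly comparable pairs via formula \eqref{genus} (your $\max(0,k-1)$ weights coincide with the paper's $k-1$ since position $0$ is never incremented), extension by transitivity, and the spanning tree of Proposition \ref{prop:degreeLength}(\ref{it:degreeDlengthS}) to identify $\mathsf{h}^s(d)$ as the $\prec$-minimum with genus $\binom{s-1}{2}$.
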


\begin{proof}
If $\mathsf{h}_1 - \mathsf{h}_2 = \mathsf{e}_i - \mathsf{e}_j$, then $g(\mathsf{h}_1) = g(\mathsf{h}_2) + (i-1)-(j-1)=g(\mathsf{h}_2)+i-j$, by formula \eqref{genus}. Hence, we obtain 
\[
\mathsf{h}_1 \prec \mathsf{h}_2 \quad\Longleftrightarrow\quad i < j\quad\Longrightarrow\quad g(\mathsf{h}_1) < g(\mathsf{h}_2) 
\]
and the assertion about the minimum follows by Proposition \ref{prop:degreeLength}.
\end{proof}

As the minimal genus ${\mathsf g}^s(d)$ does not depend on the value of $d$, from now on we will simply denote it by ${\mathsf g}^s$.

\begin{figure}[!ht]
\begin{center}
\begin{tikzpicture}[>=latex,line join=bevel,scale=0.53]
  \node (d1) at (285bp,300bp) [inner sep=1.5pt] {\tiny $(1)$};

  \draw [very thin] (185bp,280bp) -- (315bp,280bp) -- (385bp,320bp)node[above left]{\small $\mathcal{G}_1$} -- (255bp,320bp) -- cycle;
  
    \node (d2_11) at (285bp,240bp) [inner sep=1.5pt]  {\tiny $(1^2)$};
    
  \draw [very thin] (185bp,220bp) -- (315bp,220bp) -- (385bp,260bp)node[above left]{\small $\mathcal{G}_2$} -- (255bp,260bp) -- cycle;

  \node (d3_111) at (250bp,180bp) [inner sep=1.5pt]  {\tiny $(1^3)$};
  \node (d3_12) at (320bp,180bp) [inner sep=1.5pt]  {\tiny $(1,2)$};

  \draw [<-,thin] (d3_111) -- (d3_12);
  
   \draw [very thin] (150bp,160bp) -- (350bp,160bp) -- (420bp,200bp)node[above left]{\small $\mathcal{G}_3$} -- (220bp,200bp) -- cycle;

  \node (d4_1111) at (200bp,120bp) [inner sep=1.5pt] {\tiny $(1^4)$};
  \node (d4_121) at (285bp,120bp) [inner sep=1.5pt] {\tiny $(1,2,1)$};
  \node (d4_13) at (370bp,120bp) [inner sep=1.5pt] {\tiny $(1,3)$};
 
  \draw [<-,thin] (d4_1111) -- (d4_121);
  \draw [<-,thin] (d4_121) -- (d4_13);

     \draw [very thin] (100bp,100bp) -- (400bp,100bp) -- (470bp,140bp)node[above left]{\small $\mathcal{G}_4$} -- (170bp,140bp) -- cycle;

  \node (d5_11111) at (140bp,20bp) [inner sep=1.5pt] {\tiny $(1^5)$};
  \node (d5_1211) at (200bp,20bp) [inner sep=1.5pt] {\tiny $(1,2,1^2)$};
  \node (d5_122) at (270bp,60bp) [inner sep=1.5pt] {\tiny $(1,2^2)$};
  \node (d5_131) at (340bp,60bp) [inner sep=1.5pt] {\tiny $(1,3,1)$};
  \node (d5_14) at (410bp,60bp) [inner sep=1.5pt] {\tiny $(1,4)$};
  
  \draw [<-,thin] (d5_11111) -- (d5_1211);
  \draw [<-,thin] (d5_1211) -- (d5_122);
  \draw [<-,thin] (d5_1211) -- (d5_131);
  \draw [<-,thin] (d5_122) -- (d5_131);
  \draw [<-,thin] (d5_131) -- (d5_14);

    \draw [very thin] (40bp,0bp) -- (370bp,0bp) -- (510bp,80bp)node[above left]{\small $\mathcal{G}_5$} -- (180bp,80bp) -- cycle;

  \node (d6_111111) at (80bp,-100bp) [inner sep=1.5pt] {\tiny $(1^6)$};
  \node (d6_12111) at (160bp,-100bp) [inner sep=1.5pt] {\tiny $(1,2,1^3)$};
  \node (d6_1221) at (200bp,-70bp) [inner sep=1.5pt] {\tiny $(1,2^2,1)$};  
  \node (d6_1311) at (280bp,-70bp) [inner sep=1.5pt] {\tiny $(1,3,1^2)$};
  \node (d6_123) at (240bp,-40bp) [inner sep=1.5pt] {\tiny $(1,2,3)$};
  \node (d6_132) at (320bp,-40bp) [inner sep=1.5pt] {\tiny $(1,3,2)$};
  \node (d6_141) at (400bp,-40bp) [inner sep=1.5pt] {\tiny $(1,4,1)$};
  \node (d6_15) at (480bp,-40bp) [inner sep=1.5pt] {\tiny $(1,5)$};
  
  \draw [<-,thin] (d6_12111) -- (d6_1221);
  \draw [<-,thin] (d6_12111) -- (d6_1311);
  \draw [<-,thin] (d6_111111) -- (d6_12111);
  \draw [<-,thin] (d6_1221) -- (d6_123);
  \draw [<-,thin] (d6_1221) -- (d6_132);
  \draw [<-,thin] (d6_1221) -- (d6_1311);
  \draw [<-,thin] (d6_1311) -- (d6_141);
  \draw [<-,thin] (d6_1311) -- (d6_132);
  \draw [<-,thin] (d6_123) -- (d6_132);
  \draw [<-,thin] (d6_132) -- (d6_141);
  \draw [<-,thin] (d6_141) -- (d6_15);

      \draw [very thin] (-20bp,-120bp) -- (405bp,-120bp) -- (580bp,-20bp)node[above left]{\small $\mathcal{G}_6$} -- (155bp,-20bp) -- cycle;

  \node (d7_1111111) at (30bp,-250bp) [inner sep=1.5pt] {\tiny $(1^7)$};
  \node (d7_121111) at (100bp,-250bp) [inner sep=1.5pt] {\tiny $(1,2,1^4)$};
  \node (d7_12211) at (140bp,-220bp) [inner sep=1.5pt] {\tiny $(1,2^2,1^2)$};
  \node (d7_1222) at (110bp,-190bp) [inner sep=1.5pt] {\tiny $(1,2^3)$};  
  \node (d7_1231) at (210bp,-160bp) [inner sep=1.5pt] {\tiny $(1,2,3,1)$};
  \node (d7_13111) at (215bp,-220bp) [inner sep=1.5pt] {\tiny $(1,3,1^3)$};
  \node (d7_1321) at (260bp,-190bp) [inner sep=1.5pt] {\tiny $(1,3,2,1)$};
  \node (d7_133) at (300bp,-160bp) [inner sep=1.5pt] {\tiny $(1,3^2)$};
  \node (d7_1411) at (340bp,-190bp) [inner sep=1.5pt] {\tiny $(1,4,1^2)$};
  \node (d7_142) at (380bp,-160bp) [inner sep=1.5pt] {\tiny $(1,4,2)$};
  \node (d7_151) at (460bp,-160bp) [inner sep=1.5pt] {\tiny $(1,5,1)$};
  \node (d7_16) at (540bp,-160bp) [inner sep=1.5pt] {\tiny $(1,6)$};
  
  \draw [very thin] (-70bp,-270bp) -- (430bp,-270bp) -- (640bp,-140bp)node[above left]{\small $\mathcal{G}_7$} -- (140bp,-140bp) -- cycle;

  \draw [<-,thin] (d7_1111111) -- (d7_121111);
  \draw [<-,thin] (d7_121111) -- (d7_12211);
  \draw [<-,thin] (d7_121111) -- (d7_13111);
  \draw [<-,thin] (d7_12211) -- (d7_13111);
  \draw [<-,thin] (d7_12211) -- (d7_1222);
  \draw [<-,thin] (d7_12211) -- (d7_1321);
  \draw [<-,thin] (d7_12211) -- (d7_1231);
  \draw [<-,thin] (d7_1222) -- (d7_1321);
  \draw [<-,thin] (d7_1222) -- (d7_1231);
  \draw [<-,thin] (d7_1231) -- (d7_1321);
  \draw [<-,thin] (d7_1231) -- (d7_133);
  \draw [<-,thin] (d7_13111) -- (d7_1321);
  \draw [<-,thin] (d7_13111) -- (d7_1411);
  \draw [<-,thin] (d7_1321) -- (d7_133);
  \draw [<-,thin] (d7_1321) -- (d7_1411);
  \draw [<-,thin] (d7_1321) -- (d7_142);
  \draw [<-,thin] (d7_1411) -- (d7_142);
  \draw [<-,thin] (d7_1411) -- (d7_151);
  \draw [<-,thin] (d7_151) -- (d7_16);
  \draw [<-,thin] (d7_142) -- (d7_151);
  \draw [<-,thin] (d7_133) -- (d7_142);

  \draw [-,thin,dotted] (d6_111111) -- (d7_1111111);
  \draw [-,thin,dotted] (d6_111111) -- (d7_121111);
  \draw [-,thin,dotted] (d6_12111) -- (d7_121111);
  \draw [-,thin,dotted] (d6_12111) -- (d7_12211);
  \draw [-,thin,dotted] (d6_12111) -- (d7_13111);
  \draw [-,thin,dotted] (d6_1221) -- (d7_12211);
  \draw [-,thin,dotted] (d6_1221) -- (d7_1222);
  \draw [-,thin,dotted] (d6_1221) -- (d7_1231);
  \draw [-,thin,dotted] (d6_1221) -- (d7_1321);
  \draw [-,thin,dotted] (d6_123) -- (d7_1231);
  \draw [-,thin,dotted] (d6_123) -- (d7_133);
  \draw [-,thin,dotted] (d6_1311) -- (d7_13111);
  \draw [-,thin,dotted] (d6_1311) -- (d7_1321);
  \draw [-,thin,dotted] (d6_1311) -- (d7_1411);
  \draw [-,thin,dotted] (d6_132) -- (d7_1321);
  \draw [-,thin,dotted] (d6_132) -- (d7_133);
  \draw [-,thin,dotted] (d6_132) -- (d7_142);
  \draw [-,thin,dotted] (d6_141) -- (d7_1411);
  \draw [-,thin,dotted] (d6_141) -- (d7_142);
  \draw [-,thin,dotted] (d6_141) -- (d7_151);
  \draw [-,thin,dotted] (d6_15) -- (d7_151);
  \draw [-,thin,dotted] (d6_15) -- (d7_16);
  
  \draw [-,thin,dotted] (d5_122) -- (d6_123);
  \draw [-,thin,dotted] (d5_131) -- (d6_1311);
  \draw [-,thin,dotted] (d5_1211) -- (d6_1311);
  \draw [-,thin,dotted] (d3_111) -- (d4_121);
  \draw [-,thin,dotted] (d5_131) -- (d6_141);
  \draw [-,thin,dotted] (d2_11) -- (d3_111);
  \draw [-,thin,dotted] (d4_13) -- (d5_131);
  \draw [-,thin,dotted] (d5_131) -- (d6_132);
  \draw [-,thin,dotted] (d5_11111) -- (d6_111111);
  \draw [-,thin,dotted] (d1) -- (d2_11);
  \draw [-,thin,dotted] (d5_11111) -- (d6_12111);
  \draw [-,thin,dotted] (d5_122) -- (d6_1221);
  \draw [-,thin,dotted] (d4_121) -- (d5_122);
  \draw [-,thin,dotted] (d3_111) -- (d4_1111);
  \draw [-,thin,dotted] (d3_12) -- (d4_13);
  \draw [-,thin,dotted] (d5_14) -- (d6_15);
  \draw [-,thin,dotted] (d5_1211) -- (d6_1221);
  \draw [-,thin,dotted] (d4_1111) -- (d5_11111);
  \draw [-,thin,dotted] (d3_12) -- (d4_121);
  \draw [-,thin,dotted] (d5_14) -- (d6_141);
  \draw [-,thin,dotted] (d4_13) -- (d5_14);
  \draw [-,thin,dotted] (d4_121) -- (d5_1211);
  \draw [-,thin,dotted] (d5_122) -- (d6_132);
  \draw [-,thin,dotted] (d2_11) -- (d3_12);
  \draw [-,thin,dotted] (d4_121) -- (d5_131);
  \draw [-,thin,dotted] (d4_1111) -- (d5_1211);
  \draw [-,thin,dotted] (d5_1211) -- (d6_12111);
\end{tikzpicture}
\caption{\label{fig:OseqGraph3} The order relations among directly comparable elements of $\mathcal{G}_d,\ d = 1,\ldots,7$.}
\end{center}
\end{figure}
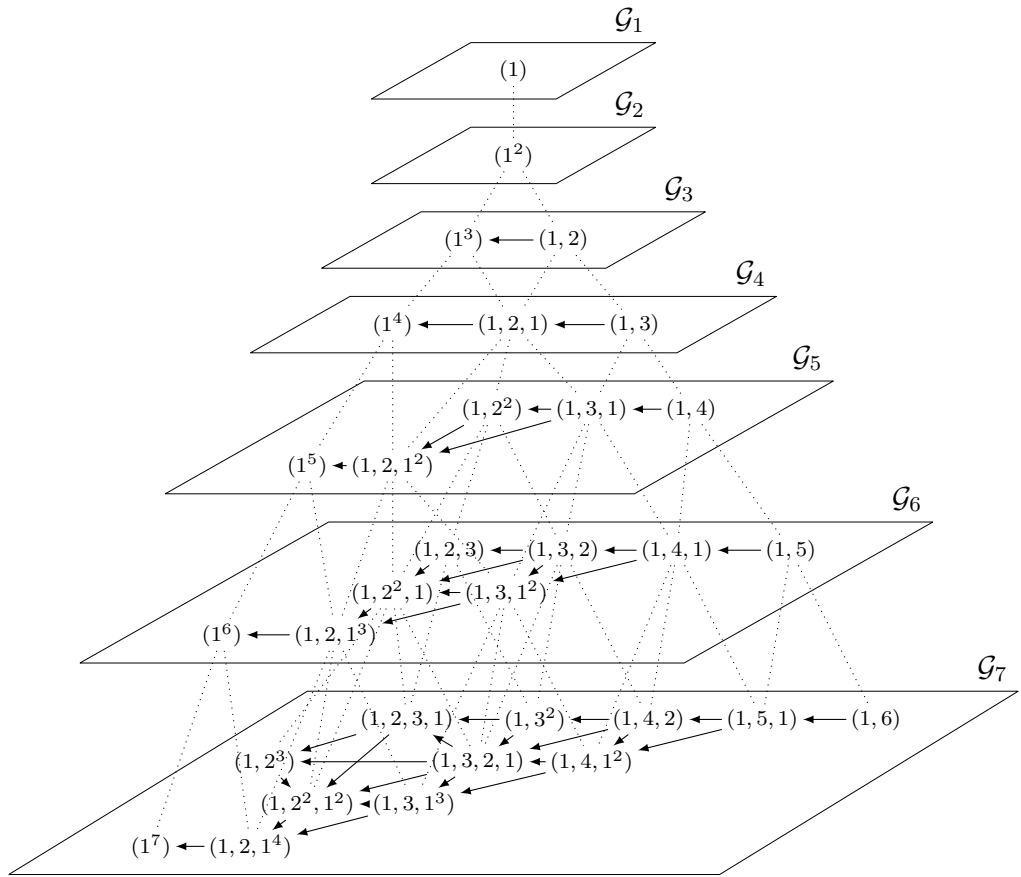

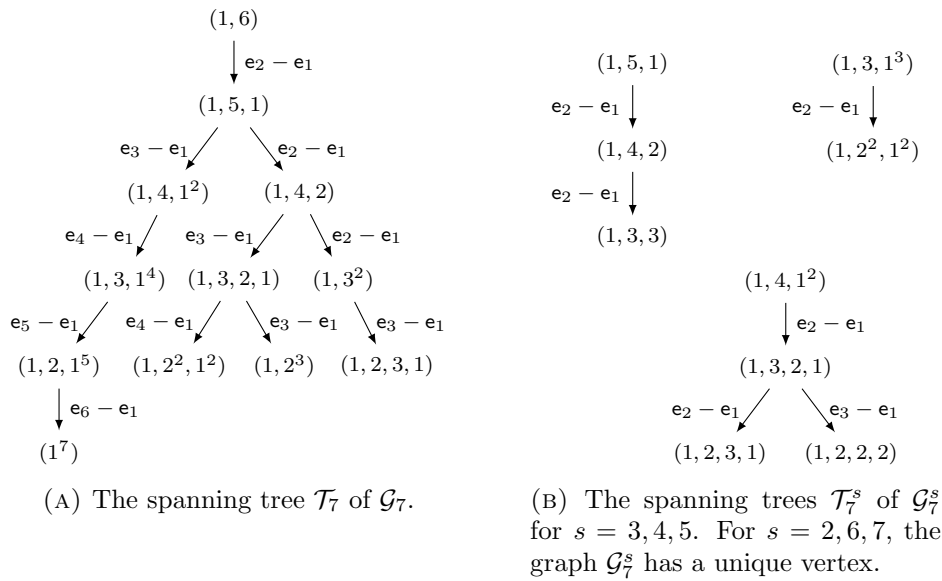
\begin{figure}[!ht]
\begin{center}
\subfloat[][The spanning tree $\mathcal{T}_7$ of $\mathcal{G}_7$.]
{\label{fig:OseqGraph4}
\begin{tikzpicture}[>=latex,scale=1.15]

\node (1111111) at (0,0) [] {\tiny $(1^7)$};

\node (121111) at (0,1) [] {\tiny $(1,2,1^5)$};
\node (12211) at (1.375,1) [] {\tiny $(1,2^2,1^2)$};
\node (1222) at (2.575,1) [] {\tiny $(1,2^3)$};
\node (1231) at (3.75,1) [] {\tiny $(1,2,3,1)$};

\draw [<-,thin] (1111111) --node[right]{\tiny $\mathsf{e}_6-\mathsf{e}_1$} (121111);

\node (13111) at (0.75,2) [] {\tiny $(1,3,1^4)$};
\node (1321) at (2,2) [] {\tiny $(1,3,2,1)$};
\node (133) at (3.25,2) [] {\tiny $(1,3^2)$};

\draw [<-,thin] (121111) --node[left]{\tiny $\mathsf{e}_5-\mathsf{e}_1$} (13111);
\draw [<-,thin] (12211) --node[left]{\tiny $\mathsf{e}_4-\mathsf{e}_1$} (1321);
\draw [<-,thin] (1222) --node[right]{\tiny $\mathsf{e}_3-\mathsf{e}_1$} (1321);
\draw [<-,thin] (1231) --node[right]{\tiny $\mathsf{e}_3-\mathsf{e}_1$} (133);

\node (1411) at (1.25,3) [] {\tiny $(1,4,1^2)$};
\node (142) at (2.75,3) [] {\tiny $(1,4,2)$};

\draw [<-,thin] (13111) --node[left]{\tiny $\mathsf{e}_4-\mathsf{e}_1$} (1411);
\draw [<-,thin] (1321) --node[left]{\tiny $\mathsf{e}_3-\mathsf{e}_1$} (142);
\draw [<-,thin] (133) --node[right]{\tiny $\mathsf{e}_2-\mathsf{e}_1$} (142);

\node (151) at (2,4) [] {\tiny $(1,5,1)$};

\draw [<-,thin] (1411) --node[left]{\tiny $\mathsf{e}_3-\mathsf{e}_1$} (151);
\draw [<-,thin] (142) --node[right]{\tiny $\mathsf{e}_2-\mathsf{e}_1$} (151);

\node (16) at (2,5) [] {\tiny $(1,6)$};
\draw [<-,thin] (151) --node[right]{\tiny $\mathsf{e}_2-\mathsf{e}_1$} (16);

\end{tikzpicture}
}
\qquad
\subfloat[][The spanning trees $\mathcal{T}_7^s$ of $\mathcal{G}_7^s$ for $s=3,4,5$. For $s=2,6,7$, the graph $\mathcal{G}_7^s$ has a unique vertex.]
{\label{fig:OseqGraph5}
\begin{tikzpicture}[>=latex,scale=1.15]

\node (s151) at (6.25,5) [] {\tiny $(1,5,1)$};
\node (s142) at (6.25,4) [] {\tiny $(1,4,2)$};
\node (s133) at (6.25,3) [] {\tiny $(1,3,3)$};
\draw [->,thin] (s151) --node[left]{\tiny $\mathsf{e}_2-\mathsf{e}_1$} (s142);
\draw [->,thin] (s142) --node[left]{\tiny $\mathsf{e}_2-\mathsf{e}_1$} (s133);

\node (s1411) at (8,2.5) [] {\tiny $(1,4,1^2)$};
\node (s1321) at (8,1.5) [] {\tiny $(1,3,2,1)$};
\node (s1231) at (7.25,0.5) [] {\tiny $(1,2,3,1)$};
\node (s1222) at (8.75,0.5) [] {\tiny $(1,2,2,2)$};
\draw [->,thin] (s1411) --node[right]{\tiny $\mathsf{e}_2-\mathsf{e}_1$} (s1321);
\draw [->,thin] (s1321) --node[left]{\tiny $\mathsf{e}_2-\mathsf{e}_1$} (s1231);
\draw [->,thin] (s1321) --node[right]{\tiny $\mathsf{e}_3-\mathsf{e}_1$} (s1222);

\node (s13111) at (9,5) [] {\tiny $(1,3,1^3)$};
\node (s12211) at (9,4) [] {\tiny $(1,2^2,1^2)$};
\draw [->,thin] (s13111) --node[left]{\tiny $\mathsf{e}_2-\mathsf{e}_1$} (s12211);
\end{tikzpicture}
}
\caption{\label{fig:fixedMultiplicity} Graph descriptions of O-sequences with given multiplicity and length.}
\end{center}
\end{figure}

Now, we can state the strategy of a general algorithm for searching aCM genera. We choose the set of O-sequences corresponding to the considered constraints on multiplicity and length and, more precisely, the associated spanning tree $\widetilde{\mathcal{T}}$. Then, we perform a depth-first search on the tree using a LIFO (Last In First Out) procedure of visit of the vertices. Assume that, at some moment in the search, we stored in a list (resp.~a stack) the vertices whose existence we know, having visited their parents, but that we have not yet visited. We visit the first vertex $\mathsf{h}$ in the list (resp.~the top of the stack). There are three possible alternative actions:
\begin{enumerate}
\item[\textsc{a}.] if $g(\mathsf{h})$ is equal to the genus we are looking for, then we end the visit returning the O-sequence $\mathsf{h}$;
\item[\textsc{b}.] if $g(\mathsf{h})$ is greater than the genus we are looking for, then we can avoid to visit the tree of descendants of $\mathsf{h}$, as the genus increases along the edges (Proposition \ref{whyOseqGraph} and Corollary \ref{cor:genusOrder});
\item[\textsc{c}.] if $g(\mathsf{h})$ is smaller than the genus we are looking for, then we need to visit the tree of descendants of $\mathsf{h}$, so we add the children of $\mathsf{h}$ in the tree $\widetilde{\mathcal{T}}$ at the beginning of the list (resp.~at the top of the stack) containing the vertices still to be visited.
\end{enumerate}

\begin{algorithm}[!ht]
\caption{\label{alg:genusTreeSearch} The algorithm for searching aCM genera with given constraints on the multiplicity and the length of the O-sequences. A trial version of this algorithm is available at \href{http://www.paololella.it/HSC/Finite_O-sequences_and_ACM_genus.html}{\small \tt http://www.paololella.it/HSC/Finite\_O-sequences\_and\_ACM\_genus.html}}
\begin{algorithmic}[1]
\Procedure{genusSearch}{$g,\widetilde{\mathcal{T}}$}
\Require \begin{itemize} \item[] $g$, a non-negative integer. \item[]
 \hspace{0.65cm}$\widetilde{\mathcal{T}}$, a spanning tree chosen among $\mathcal{T}$, $\mathcal{T}_d$, $\mathcal{T}^s$ and $\mathcal{T}_d^s$.\end{itemize}
\Ensure an O-sequence $\mathsf{h}$ such that $g(\mathsf{h}) = g$ (if it exists).
\State $\textsf{stack} := \{\textsc{root}(\widetilde{\mathcal{T}})\}$;
\While{$\textsf{stack} \neq \emptyset$}
\State $\mathsf{h} := \textsc{removeFirst}(\textsf{stack})$;
\If{$g(\mathsf{h}) = g$}
\Return $\mathsf{h}$;
\ElsIf{$g(\mathsf{h}) < g$}
\State $\textsc{addFirst}(\textsf{stack},\textsc{children}(\mathsf{h},\widetilde{\mathcal{T}}))$;
\EndIf
\EndWhile
\EndProcedure
\end{algorithmic}
\end{algorithm}


\section{Combinatorial ranges}
\label{sec:combRanges}

From now on, we assume $d>2$, as $\mathcal{G}_d$ has only one element for $d\in\{1,2\}$.

For convenience, we let $G_d$ (resp.~$G_d^s$) be the set of all the arithmetic genera of the aCM curves of degree $d$ (resp.~of degree $d$ with $h$-vector of length $s$), i.e.~$G_d:=\{ g({\mathsf{h}})\ \vert\ {\mathsf{h}} \in \mathcal{G}_d\}$ (resp.~$G_d^s:=\{ g({\mathsf{h}})\ \vert\ {\mathsf{h}} \in \mathcal{G}_d^s\}$). 

Looking at the graph $\mathcal{G}_d$, we immediately can observe the well known fact that $G_d\subseteq \big\{0,\ldots,\binom{d-1}{2}\big\}$ (see \cite[Theorem 3.1]{H94}). Denoting by $[a,b]$ the set of integers $\{n \in \mathbb{N}\ \vert\ a \leq n \leq b\}$, we let $R_d:=\big[0,\binom{d-1}{2}\big]$. In the range $R_d$ we single out smaller suitable ranges, taking into account also the length of the O-sequences.

Recall that, by the partial order $\prec$ introduced in Definition \ref{ordering} and by Corollary \ref{cor:genusOrder}, we have $\min({G}_d^s)= g(\min (\mathcal{G}_d^s))=\mathsf{g}^s=\binom{s-1}{2}$, thus $\mathsf{g}^s<\mathsf{g}^{s+1}$ and $\mathsf{g}^{s+1}-\mathsf{g}^s=s-1$. In order to obtain an analogous result about a maximum, we extend the partial order $\prec$ to the following total order on $\mathcal G_d^s$.

\begin{definition}\label{total order}
Given two O-sequences ${\mathsf h}=(1,h_1,\ldots,h_{s-1})$ and ${\mathsf h}'=(1,h'_1,\ldots,h'_{s-1})$ of $\mathcal G_d^s$, we denote by $<$ the total order on $\mathcal G_d^s$ such that ${\mathsf h} < {\mathsf h}'$ if $h_\ell<h'_\ell$, where $\ell:=\max\{j : h_j\not= h'_j\}$. 
\end{definition}

Although the usual order on $\mathbb N$ does not induce on ${\mathcal G}_d^s$ the total order $<$ (see  Example \ref{order not induced}), we notice that $\min_{\prec}({\mathcal G}_d^s)= \min({\mathcal G}_d^s)$ with respect to $<$. Furthermore, we can consider also $\max({\mathcal G}_d^s)$ with respect to $<$ and obtain the following non obvious result.

\begin{theorem}\label{th:degrevlex}
Let ${\mathsf h}=(1,h_1,\ldots,h_{s-1})$ and ${\mathsf k}=(1,k_1,\ldots,k_{s-1})$ be two O-sequences of $\mathcal G_d^s$. If ${\mathsf k} < {\mathsf h}$ and $g({\mathsf k}) > g({\mathsf h})$, then there is an O-sequence $\bar{\mathsf h} \in \mathcal G_d^s$ such that $\bar{\mathsf h} > {\mathsf h}$  and $g(\bar{\mathsf h}) > g({\mathsf k})$. Thus, $\max(G_d^s) = g (\max (\mathcal{G}_d^s))$.
\end{theorem}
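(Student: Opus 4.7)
My plan is to construct the witness $\bar{\mathsf h}$ as an explicit modification of $\mathsf h$. Let $\ell=\max\{j : k_j\neq h_j\}$, so that $k_j=h_j$ for $j>\ell$ and $\delta:=h_\ell-k_\ell>0$. The equality of multiplicities $e(\mathsf k)=e(\mathsf h)=d$ forces $\sum_{j<\ell}(k_j-h_j)=\delta$, and rewriting formula~\eqref{genus} yields
\[
0<g(\mathsf k)-g(\mathsf h)=\sum_{j=2}^{\ell-1}(j-1)(k_j-h_j)-(\ell-1)\delta;
\]
in particular, at least one index $m<\ell$ must satisfy $k_m>h_m$, so $\mathsf k$ carries \emph{extra weight} at early positions compared to $\mathsf h$.

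The aim is to exhibit $\bar{\mathsf h}\in\mathcal{G}_d^s$ with $\bar h_j=h_j$ for every $j>\ell$ and $\bar h_\ell>h_\ell$: such a sequence automatically satisfies $\bar{\mathsf h}>\mathsf h$ in the order $<$, because $\ell$ is then the last index at which $\bar{\mathsf h}$ and $\mathsf h$ differ. I would look for $\bar{\mathsf h}$ of the form $\mathsf h+\sum_{i<\ell}\alpha_i(\mathsf{e}_\ell-\mathsf{e}_i)$ with $\alpha_i\geq 0$ and $\sum_i\alpha_i\geq 1$; this preserves the multiplicity $d$ and the length $s$ as long as $\bar h_j\geq 1$ for every $j$, and each elementary shift $\mathsf{e}_\ell-\mathsf{e}_i$ raises the genus by $\ell-i$ thanks to~\eqref{genus}. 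A sufficiently rich combination of shifts then pushes $g(\bar{\mathsf h})-g(\mathsf h)$ past $g(\mathsf k)-g(\mathsf h)$, yielding $g(\bar{\mathsf h})>g(\mathsf k)$.

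The main obstacle is admissibility: the resulting $\bar{\mathsf h}$ must still satisfy all Macaulay inequalities, and the critical one is $\bar h_\ell\leq \bar h_{\ell-1}^{\langle \ell-1\rangle}$. The easy case is when $h_{\ell-1}^{\langle \ell-1\rangle}>h_\ell$: the single shift $\mathsf{e}_\ell-\mathsf{e}_1$, available as soon as $h_1\geq 2$, suffices and the needed genus gap follows by direct estimation. The delicate case is the saturated one, $h_{\ell-1}^{\langle \ell-1\rangle}=h_\ell$: then $\bar h_\ell$ cannot be raised without first enlarging $\bar h_{\ell-1}$. My plan is to attack this recursively, inducting on $\ell$: the subproblem of raising the $(\ell-1)$-st entry is of exactly the same shape as the present theorem applied to the truncated O-sequences of length $\ell$ extracted from the prefixes of $\mathsf k$ and $\mathsf h$, and the identity $\sum_{j<\ell}(k_j-h_j)=\delta>0$ guarantees that $\mathsf k$ provides a suitable donor of weight for such an enlargement.

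The final assertion $\max(G_d^s)=g(\max(\mathcal{G}_d^s))$ follows from the first part by contradiction: if the maximum genus were attained at some $\mathsf h^\ast<\max(\mathcal{G}_d^s)$ with $g(\mathsf h^\ast)>g(\max(\mathcal{G}_d^s))$, applying the first part to the pair $(\mathsf k,\mathsf h)=(\mathsf h^\ast,\max(\mathcal{G}_d^s))$ would produce $\bar{\mathsf h}>\max(\mathcal{G}_d^s)$, contradicting the maximality of $\max(\mathcal{G}_d^s)$ with respect to $<$.
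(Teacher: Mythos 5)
Your construction goes in the wrong direction, and this is a genuine gap, not a fixable detail. You propose $\bar{\mathsf h}=\mathsf h+\sum_{i<\ell}\alpha_i(\mathsf e_\ell-\mathsf e_i)$, i.e.\ you only move weight of $\mathsf h$ \emph{into} position $\ell$ while \emph{decreasing} earlier entries. But the genus surplus $g(\mathsf k)-g(\mathsf h)$ comes from large middle entries of $\mathsf k$, and it can be arbitrarily larger than anything your moves can produce: admissibility caps $\bar h_\ell$ by $\bar h_{\ell-1}^{\langle \ell-1\rangle}$, and lowering earlier entries only lowers that cap. Concretely, take $d=57$, $s=5$, $\mathsf h=(1,48,3,3,2)$ and $\mathsf k=(1,5,15,35,1)$ (both admissible, since $5^{\langle 1\rangle}=15$, $15^{\langle 2\rangle}=35$, and $3\leq 3^{\langle 2\rangle}=4$, $2\leq 3^{\langle 3\rangle}=3$). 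Here $\mathsf k<\mathsf h$, $g(\mathsf h)=15$, $g(\mathsf k)=88$, and you are in your ``easy case'' $h_{\ell-1}^{\langle\ell-1\rangle}=3>2=h_\ell$ with $h_1\geq 2$; yet the single shift $\mathsf e_4-\mathsf e_1$ gives genus $18$, and in fact any $\bar{\mathsf h}$ of your form satisfies $\bar h_4\leq \bar h_3^{\langle 3\rangle}\leq 3$, hence $\sum_i\alpha_i\leq 1$ and $g(\bar{\mathsf h})\leq 18\ll 88$. So the claim that ``the needed genus gap follows by direct estimation'' fails (a shift $\mathsf e_\ell-\mathsf e_i$ adds only $\ell-i$ to the genus), and the saturated-case recursion does not rescue it either: the truncated prefixes of $\mathsf h$ and $\mathsf k$ do not have equal multiplicity, so the inductive hypothesis (which is a statement about two O-sequences in the same $\mathcal G_{d'}^{s'}$) does not apply to them, and the paper's own Example~\ref{order not induced}(b) already shows a witness must \emph{increase} a middle entry of $\mathsf h$ (there $\bar{\mathsf h}=(1,6,10,3,3)$ with $\bar h_2=10>3=h_2$), which your ansatz forbids.

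The paper's proof instead modifies $\mathsf k$, not $\mathsf h$. After reducing to the case $\ell=s-1$, it sets $t:=\max\{j\in\{2,\ldots,s-2\}: h_j<k_j\}$ (which exists because $g(\mathsf k)>g(\mathsf h)$ while $h_{s-1}>k_{s-1}$), and moves $b$ units of weight from position $t$ of $\mathsf k$ rightward into positions $t+1,\ldots,s-1$, filling them greedily but never above the corresponding entries of $\mathsf h$. Admissibility is preserved because $k_t-b\geq h_t$ and the greedy filling keeps each entry between $k_j$ and $h_j$, and every such move strictly increases the genus, so the output automatically has genus $>g(\mathsf k)$; iterating, one reaches $\bar{\mathsf h}$ agreeing with $\mathsf h$ beyond $t$ and with $\bar h_t\geq h_t+1$, hence $\bar{\mathsf h}>\mathsf h$. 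In my counterexample this produces $(1,5,15,34,2)$ of genus $89>88$, retaining the large entries of $\mathsf k$ -- exactly what your construction throws away. Your derivation of the final assertion $\max(G_d^s)=g(\max(\mathcal G_d^s))$ from the first part is fine, but the first part itself is not proved.
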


\begin{proof}
We can assume $s-1=\max\{j : h_j\not= k_j\}$, hence $h_{s-1}>k_{s-1}$ because  ${\mathsf h} > {\mathsf k}$. By the hypotheses, we have
\[
g({\mathsf h})=\sum_j^{s-2}(j-1)h_j +(s-2)h_{s-1} < \sum_j^{s-2} (j-1)k_j + (s-2)k_{s-1}=g({\mathsf k})
\]
which implies there exists the integer $t:=\max\{j\in\{2,\ldots,s-2\} : h_j<k_j\}$ and so
\begin{equation}\label{eq:degrevlex}
\begin{array}{llllllll}
(1, & h_1, &\ldots, & h_{t}, & h_{t+1}, &\ldots, & h_{s-2}, & h_{s-1}) \\
    &      &        &\wedge & \veebar&        &\veebar    &\vee        \\
(1, & k_1, &\ldots, & k_{t}, & k_{t+1}, &\ldots, & k_{s-2}, & k_{s-1}) 
\end{array}
\end{equation}  
that is 
\begin{equation*}
\begin{cases}
h_t < k_t,\\
h_i \geq k_i,& t+1 \leq i \leq s-2,\\
h_{s-1} > k_{s-1}.
\end{cases}
\end{equation*}
Note that $k_t^{\langle t \rangle}\geq h_t^{\langle t\rangle} \geq h_{t+1}\geq k_{t+1}$. Hence, we can consider the O-sequence ${\mathsf h}' := {\mathsf k} - b \mathsf{e}_t + \sum_{j=t+1}^{s-1} c_{j} \mathsf{e}_j$, where
\[
 b = \min \left\{k_t-h_t,\sum_{j=t+1}^{s-1} h_j - k_j\right\} \quad \text{and}\quad c_j = \min \left\{ h_j-k_j, b - \sum_{i=t+1}^{j-1} c_i\right\}
\]
 and $h'_j\leq h_j$ for every $j>t$.

The corresponding genus of ${\mathsf h}'$ is 
\[
g({\mathsf h}')=g({\mathsf k})- (t-1)b + \sum_{j=t+1}^{s-1} (j-1)c_j > g({\mathsf k}) > g({\mathsf h}).
\]
If needed, replacing the O-sequence ${\mathsf k}$ by ${\mathsf h}'$ and repeating the same argument as before, we obtain an O-sequence ${\mathsf h}'$ with $h'_j=h_j$ for every $j>t$ and $g({\mathsf h}')>g({\mathsf h})$. If ${\mathsf h}' < {\mathsf h}$, we can repeat the same argument as before until we obtain an O-sequence $\bar{\mathsf h}$ with $\bar h_j=h_j$ for every $j>t$ and $\bar h_t\geq h_t+1$.
\end{proof}

\begin{example}\label{order not induced}
(a) Consider the two O-sequences ${\mathsf h}=(1,6,4,2,1)$ and ${\mathsf k}=(1,4,7,1,1)$ of $\mathcal G_{14}^5$. We have  ${\mathsf h} > {\mathsf k}$ and $11=g({\mathsf h}) < g({\mathsf k})=12$ as in the hypotheses of Theorem \ref{th:degrevlex}. In this case, we obtain $t=2$, $b = \min\{3,1\} = 1$, $c_3 = \min \{1,3\} = 1$ and $c_4 = \min\{ 0,2\} = 0$, so that $\bar{\mathsf h}=\mathsf{k}-\mathsf{e}_2+\mathsf{e}_3 =(1,4,6,2,1)$ with genus $g(\bar{\mathsf h})=13 > g(\mathsf{k})$ and $\bar{\mathsf h}> {\mathsf h}$.

(b) Consider the two O-sequences ${\mathsf h}=(1,13,3,3,3)$ and ${\mathsf k}=(1,6,13,2,1)$ of $\mathcal G_{23}^5$. We have ${\mathsf h} > {\mathsf k}$ and $18 = g({\mathsf h}) < g({\mathsf k})=20$. Applying Theorem \ref{th:degrevlex}, as $t = 2$, $b = \min \{10, 3\} = 3$, $c_3 = \min\{1,10\} = 1$ and $c_4 = \min\{2,9\} = 2$, we determine $\bar{\mathsf h} = \mathsf{k} - 3\mathsf{e}_2 + \mathsf{e}_3 + 2\mathsf{e}_4 = (1,6,10,3,3) > \mathsf{h}$ and $g(\bar{\mathsf{h}}) = 18+2+3 = 21 > g(\mathsf{k})$.
\end{example}

Looking again at the graph $\mathcal{G}^s$, we can find a way to detect $g(\max({\mathcal G}_d^s))$. We first note that, if $d<s$, then $\mathcal G_d^s$ is empty and if $d=s$, then we have a unique O-sequence $(1^s)$ corresponding to a plane curve of degree $s$, i.e.~with genus $\binom{s-1}{2}$. For $d=s+1$ we have the unique O-sequence $(1,2,1^{s-2})$, obtained from $(1^{s})$ by increasing $h_1$ by $1$ and corresponding to a curve of degree $s+1$ and genus $\binom{s-1}{2}$. In the other cases, we deduce $\max({\mathcal G}_d^s)$ assuming to know the O-sequence  $\mathsf{h}=\max({\mathcal G}_{d-1}^s)$ and consequently the genus $g(\mathsf{h})=\sum_{j=2}^{s-1} h_j(j-1)=\max(G_{d-1}^s)$ (Theorem \ref{th:degrevlex}). Next result shows how to find $\max({\mathcal G}_d^s)$ and then $g(\max({\mathcal G}_d^s))$.

\begin{proposition}\label{dim procedura paolo}
Given any $d> s\geq 3$, let ${\mathsf h}=\max({\mathcal G}_{d-1}^s)$. If $\imath$ is the highest index such that ${\mathsf h}+{\mathsf e}_\imath$ is an O-sequence in $\mathcal G_d^s$, then $\max({\mathcal G}_{d}^s)={\mathsf h}+{\mathsf e}_\imath$ and $g(\max({\mathcal G}_d^s))=g(\max({\mathcal G}_{d-1}^s))+\imath-1$.
\end{proposition}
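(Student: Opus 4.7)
The target is to show $\mathsf{h}^* := \max(\mathcal{G}_d^s) = \mathsf{h} + \mathsf{e}_\imath$ with respect to the total order $<$ of Definition \ref{total order}; the genus assertion is then immediate from linearity of formula \eqref{genus}, which gives $g(\mathsf{h} + \mathsf{e}_\imath) = g(\mathsf{h}) + (\imath-1)$. Since $\mathsf{h} + \mathsf{e}_\imath \in \mathcal{G}_d^s$ by the choice of $\imath$, the inequality $\mathsf{h}^* \geq \mathsf{h} + \mathsf{e}_\imath$ is free; the content lies in proving the reverse, which I would do by contradiction.

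Assume $\mathsf{h}^* > \mathsf{h} + \mathsf{e}_\imath$ and let $\ell := \max\{j : h^*_j \neq (\mathsf{h} + \mathsf{e}_\imath)_j\}$, so that $h^*_\ell > (\mathsf{h} + \mathsf{e}_\imath)_\ell$. A routine check of length and of the Macaulay growth conditions, using $h^*_\ell - 1 \geq h_\ell$ together with the admissibility of $\mathsf{h}$, shows that $\mathsf{h}^* - \mathsf{e}_\ell$ lies in $\mathcal{G}_{d-1}^s$. By the maximality of $\mathsf{h}$ we have $\mathsf{h}^* - \mathsf{e}_\ell \leq \mathsf{h}$, and I would derive the required contradiction by establishing the opposite strict inequality $\mathsf{h}^* - \mathsf{e}_\ell > \mathsf{h}$. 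In the easy case $\ell = \imath$ one has $h^*_\imath \geq h_\imath + 2$ and $h^*_j = h_j$ for $j > \imath$, so the comparison at index $\imath$ gives $(\mathsf{h}^* - \mathsf{e}_\imath)_\imath = h^*_\imath - 1 > h_\imath$, whence $\mathsf{h}^* - \mathsf{e}_\imath > \mathsf{h}$ at once.

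The delicate case is $\ell > \imath$. The crucial observation is a \emph{tightness identity}: since $\mathsf{h} + \mathsf{e}_i$ is not admissible for any $i > \imath$ (by the maximality of $\imath$), and the only place its admissibility can break is position $i - 1$, it follows that $(h_{i-1})^{\langle i-1 \rangle} = h_i$ for every $i > \imath$. Applying this at $i = \ell$ and combining with the admissibility of $\mathsf{h}^*$ at $\ell - 1$ yields $(h^*_{\ell-1})^{\langle \ell - 1 \rangle} \geq h^*_\ell \geq h_\ell + 1 > h_\ell = (h_{\ell-1})^{\langle \ell - 1 \rangle}$, and by strict monotonicity of $a \mapsto a^{\langle \ell-1\rangle}$ we conclude $h^*_{\ell-1} > h_{\ell-1}$. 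With this in hand, if $h^*_\ell \geq h_\ell + 2$ the contradiction $\mathsf{h}^*-\mathsf{e}_\ell > \mathsf{h}$ already appears at index $\ell$; the remaining borderline subcase $h^*_\ell = h_\ell + 1$---the main obstacle, since there the naive comparison at position $\ell$ only gives equality---is rescued precisely by the propagation to $\ell - 1$, where $\mathsf{h}^* - \mathsf{e}_\ell$ is strictly larger than $\mathsf{h}$.
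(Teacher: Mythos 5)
Your argument is correct in substance and pivots on the same key observation as the paper's proof, namely the tightness identity $h_i = h_{i-1}^{\langle i-1\rangle}$ for every $i>\imath$ forced by the maximality of $\imath$ (since adding $\mathsf{e}_i$ can only violate admissibility at the step $i-1\to i$). The organization, however, is different. The paper works upward from degree $d-1$: it shows that every $\mathsf{h}'\in\mathcal{G}_{d-1}^s$ other than $\mathsf{h}$ agrees with $\mathsf{h}$ in all positions $\geq\imath$, and then that no admissible increment $\mathsf{h}'+\mathsf{e}_\lambda$ can exceed $\mathsf{h}+\mathsf{e}_\imath$, because the entries $h_j=h_{j-1}^{\langle j-1\rangle}$ with $j>\imath$ cannot be increased; this implicitly uses that every element of $\mathcal{G}_d^s$ arises from an element of $\mathcal{G}_{d-1}^s$ by adding some $\mathsf{e}_\lambda$ (connectedness of $\mathcal{G}^s$). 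You instead work downward from the putative maximum $\mathsf{h}^*$ of $\mathcal{G}_d^s$, subtract $\mathsf{e}_\ell$ at the top position where $\mathsf{h}^*$ and $\mathsf{h}+\mathsf{e}_\imath$ differ, and contradict the maximality of $\mathsf{h}$ in $\mathcal{G}_{d-1}^s$. Your admissibility check for $\mathsf{h}^*-\mathsf{e}_\ell$, the easy case $\ell=\imath$, and the borderline subcase $h^*_\ell=h_\ell+1$ rescued by the propagation $h^*_{\ell-1}>h_{\ell-1}$ all check out; your descent is self-contained (no appeal to the connectedness statement), at the price of the explicit case analysis on $\ell$, and the genus identity is indeed immediate from formula \eqref{genus}.

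One case is missing: you treat only $\ell=\imath$ and $\ell>\imath$, but nothing a priori excludes $\ell<\imath$, i.e.~$\mathsf{h}^*$ agreeing with $\mathsf{h}+\mathsf{e}_\imath$ in every position above $\ell$ (in particular having $h_\imath+1$ at position $\imath$) and being larger at $\ell$, compensated by smaller entries below $\ell$. This case is harmless and is settled by the same comparison you use elsewhere: $\mathsf{h}^*-\mathsf{e}_\ell$ then coincides with $\mathsf{h}$ in all positions $>\imath$ and has value $h_\imath+1>h_\imath$ at position $\imath$, so $\mathsf{h}^*-\mathsf{e}_\ell>\mathsf{h}$, contradicting the maximality of $\mathsf{h}$; note only that the admissibility of $\mathsf{h}^*-\mathsf{e}_\ell$ at the step $\ell\to\ell+1$ uses, when $\ell+1=\imath$, the admissibility of $\mathsf{h}+\mathsf{e}_\imath$ rather than just that of $\mathsf{h}$. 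With this case added, your proof is complete.
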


\begin{proof}
By the assumption, we have $h_\imath< h_{\imath-1}^{\langle \imath-1 \rangle}$, so that $h_\imath +1\leq h_{\imath-1}^{\langle \imath-1 \rangle}$ and $h_{\imath+r}= h_{\imath+r-1}^{\langle \imath+r-1 \rangle}$, for every $1\leq r\leq s-1-\imath$, that is:
$${\mathsf h} = (1,\ldots,h_\imath, h_\imath^{\langle \imath \rangle}, h_{\imath+1}^{\langle \imath+1 \rangle},\ldots,h_{s-2}^{\langle s-2 \rangle})$$ and
$${\mathsf h}+{\mathsf e}_\imath = (1,\ldots,h_\imath +1, h_\imath^{\langle \imath \rangle}, h_{\imath+1}^{\langle \imath+1 \rangle},\ldots,h_{s-2}^{\langle s-2 \rangle}).$$
For every ${\mathsf h}' \in \mathcal G_{d-1}^s\setminus \{{\mathsf h}\}$, consider the integer $\ell:=\max\{j: \, h_j\neq h'_j\}$. Then, we have $h'_\ell < h_\ell$ and $h'_{\ell+r} = h_{\ell+r}$, for every $1\leq r\leq s-1-\ell$, because ${\mathsf h}=\max({\mathcal G}_{d-1}^s)$. Note that we have $\ell<\imath$, otherwise $h'_\ell < h_\ell$ would imply $h'_{\ell+1}\leq {h'}_\ell^{\langle \ell \rangle} < h_\ell^{\langle \ell \rangle}=h_{\ell+1}$, against the definition of $\ell$. Therefore, 
$${\mathsf h}'= (1,\ldots,h'_\ell,\ldots, h_\imath, h_\imath^{\langle \imath \rangle}, \ldots,h_{s-2}^{\langle s-2 \rangle}).$$

If there were an O-sequence ${\mathsf h}' \in \mathcal G_{d-1}^s$ such that ${\mathsf h}'+{\mathsf e}_\lambda > {\mathsf h}+{\mathsf e}_\imath$ for some index $\lambda$ such that ${\mathsf h}'+{\mathsf e}_\lambda \in \mathcal G_d^s$, then $\imath < \lambda$.
We have seen that ${\mathsf h}$ and ${\mathsf h}'$ certainly have equal entries for indices greater than or equal to $\imath$ and $h_\imath+1>h_\imath=h'_\imath$. But, for indices $j>\imath$, the value $h'_j=h_j=h_{j-1}^{\langle j-1 \rangle}$ cannot be increased by the definition of O-sequences. Thus, we obtain $\max({\mathcal G}_{d}^s)={\mathsf h}+{\mathsf e}_\imath$.
The last assertion follows by Theorem \ref{th:degrevlex} and formula \eqref{genus}.
\end{proof}

For every $d > 2$ and $s\in \{\lfloor \frac{d}{2}\rfloor +1,\ldots,d\}$, we let 
\begin{equation}\label{eq:max}
{\mathsf h}_s(d):=(1,2^{d-s},1^{2s-d-1}) \quad \text{ and } \quad {\mathsf g}_s(d):=g({\mathsf h}_s(d))=\tbinom{s-1}{2}+\tbinom{d-s}{2}.
\end{equation}
Then, we have: \ $\max({\mathcal G}_d^s)={\mathsf h}_s(d)$, ${\mathsf g}^d=\binom{d-1}{2}={\mathsf g}_{d}(d)$ and ${\mathsf g}^{d-1}=\binom{d-2}{2}={\mathsf g}_{d-1}(d)$.

\begin{remark}
Another description of the maximal genus of a range $R_d^s$ could be set in terms of minimal Hilbert functions with a constant Hilbert polynomial and a given regularity (see \cite[Examples 4.6 and 4.8]{R} and \cite{CMreg}). By the way, the combinatorial description we provide here arises in a very natural way and gives more information, at least from a computational point of view.  
\end{remark}

The previous results together with those of Sections \ref{sec:combinatorial} suggest to consider the following smaller ranges in $R_d$.

\begin{definition}
For every $d\geq s\geq 2$, the set of integers between ${\mathsf g}^s$ and $\max(G_d^s)$ is called {\em $(d,s)$-range} and denoted by $R_d^s$, i.e.~$R_d^s:=\left\{\alpha\in\mathbb{N}\ \vert\ \tbinom{s-1}{2}={\mathsf g}^s\leq\alpha\leq \max(G_d^s) \right\}$.
\end{definition}

\begin{corollary}\label{cor:ranges}
For every $d\geq s \geq 2$, the arithmetic genus of an aCM curve of degree $d$ having $h$-vector of length $s$ belongs to the range $R^s_d$.
\end{corollary}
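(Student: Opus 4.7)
The plan is to observe that Corollary \ref{cor:ranges} is an immediate consequence of the two extremal results already established, namely Corollary \ref{cor:genusOrder} for the minimum and Theorem \ref{th:degrevlex} for the maximum. Concretely, let $C$ be any aCM curve of degree $d$ whose $h$-vector $\mathsf{h}$ has length $s$; by definition $\mathsf{h} \in \mathcal{G}_d^s$, so its genus $g(\mathsf{h})$ lies in $G_d^s$. It therefore suffices to verify the inequalities
\[
\tbinom{s-1}{2} \;\leq\; g(\mathsf{h}) \;\leq\; \max(G_d^s).
\]

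For the lower bound, I would invoke Corollary \ref{cor:genusOrder}, which asserts that the total order on $\mathbb{N}$ refines the partial order $\prec$ on $\mathcal{G}_d$, and that ${\mathsf h}^s(d) = \min_{\prec}(\mathcal{G}_d^s)$ with image ${\mathsf g}^s = \binom{s-1}{2}$ under $g$. Since $g$ monotonically respects $\prec$, every $\mathsf{h} \in \mathcal{G}_d^s$ satisfies $g(\mathsf{h}) \geq \mathsf{g}^s$, so $\binom{s-1}{2}$ is indeed the infimum of $G_d^s$ (and is attained, so it is the minimum). For the upper bound, I would simply quote the fact that $\max(G_d^s)$ is defined as the largest element of $G_d^s$; its existence is guaranteed because $\mathcal{G}_d^s$ is finite, and Theorem \ref{th:degrevlex} already identifies it as $g(\max_<(\mathcal{G}_d^s))$, which equals $\mathsf{g}_s(d) = \binom{s-1}{2} + \binom{d-s}{2}$ in view of Proposition \ref{dim procedura paolo} and formula \eqref{eq:max}.

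Combining these two bounds places $g(\mathsf{h})$ in the integer interval $\big[\binom{s-1}{2}, \max(G_d^s)\big]$, which is exactly $R_d^s$ by definition, completing the argument. Since both extremal facts have already been proven, there is no genuine obstacle here; the only subtle point to flag is that the corollary makes no claim of \emph{surjectivity} onto $R_d^s$ (it asserts $G_d^s \subseteq R_d^s$, not equality), so I would avoid overstating the conclusion and reserve any discussion of gaps inside $R_d^s$ for the later sections dealing with the gap analysis.
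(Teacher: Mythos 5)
Your proposal is correct and follows essentially the same route as the paper, whose proof is a one-line citation of Corollary \ref{cor:genusOrder} (for the minimum $\mathsf{g}^s=\binom{s-1}{2}$) together with Theorem \ref{th:degrevlex} and Proposition \ref{dim procedura paolo} (for identifying the maximum of $G_d^s$). Your additional remark that the corollary asserts only the inclusion $G_d^s\subseteq R_d^s$ is consistent with the paper's reading and does not change the argument.
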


\begin{proof}
The statement follows by Corollary \ref{cor:genusOrder}, Theorem \ref{th:degrevlex} and Proposition \ref{dim procedura paolo}.
\end{proof}

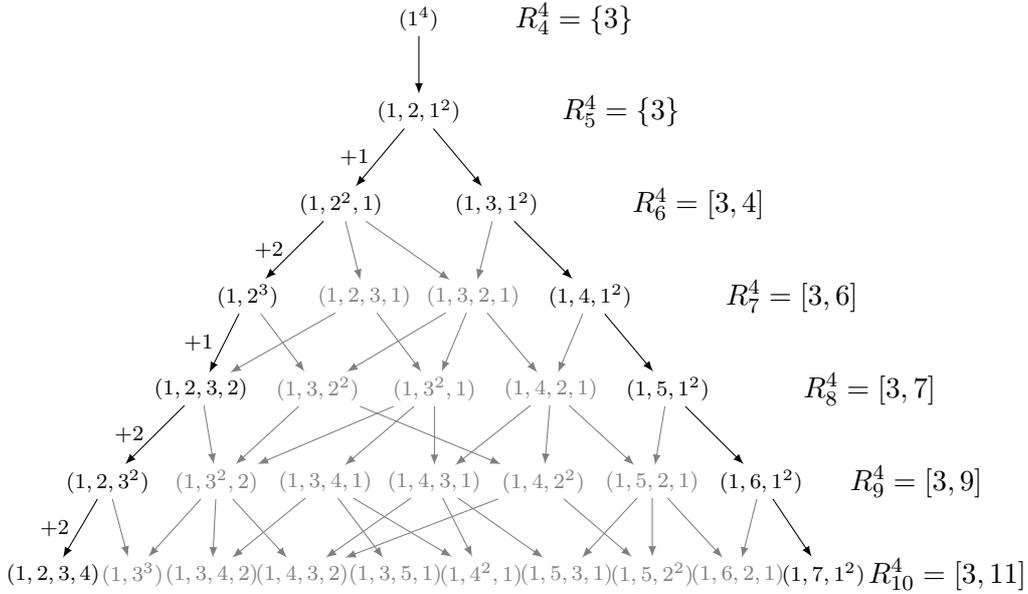
\begin{figure}[!ht]
\begin{center}
\begin{tikzpicture}[>=latex,line join=bevel,scale=0.58]
  \node (d4_1111) at (300bp,370bp) [inner sep=2pt] {\tiny $(1^4)$};
  
  \node at (400bp,370bp) [] {\small $R_4^4 = \{3\}$};
  
  \node (d5_1211) at (300bp,310bp) [inner sep=2pt] {\tiny $(1,2,1^2)$};
  
  \node at (430bp,310bp) [] {\small $R_5^4 = \{3\}$};

  \node (d6_1221) at (250bp,250bp) [inner sep=2pt] {\tiny $(1,2^2,1)$};
  \node (d6_1311) at (350bp,250bp) [inner sep=2pt] {\tiny $(1,3,1^2)$};
  
  \node at (480bp,250bp) [] {\small $R_6^4 = [3,4]$};

  \node (d7_1222) at (190bp,190bp) [inner sep=2pt] {\tiny $(1,2^3)$};
  \node (d7_1231) at (265bp,190bp) [inner sep=2pt,black!50] {\tiny $(1,2,3,1)$};
  \node (d7_1321) at (335bp,190bp) [inner sep=2pt,black!50] {\tiny $(1,3,2,1)$};
  \node (d7_1411) at (410bp,190bp) [inner sep=2pt] {\tiny $(1,4,1^2)$};
  
  \node at (540bp,190bp) [] {\small $R_7^4 = [3,6]$};
  
  \node (d8_1232) at (160bp,130bp) [inner sep=2pt] {\tiny $(1,2,3,2)$};
  \node (d8_1322) at (235bp,130bp) [inner sep=2pt,black!50] {\tiny $(1,3,2^2)$};
  \node (d8_1331) at (310bp,130bp) [inner sep=2pt,black!50] {\tiny $(1,3^2,1)$};
  \node (d8_1421) at (385bp,130bp) [inner sep=2pt,black!50] {\tiny $(1,4,2,1)$};
  \node (d8_1511) at (460bp,130bp) [inner sep=2pt] {\tiny $(1,5,1^2)$};

  \node at (590bp,130bp) [] {\small $R_8^4 = [3,7]$};
  
  \node (d9_1233) at (100bp,70bp) [inner sep=2pt] {\tiny $(1,2,3^2)$};
  \node (d9_1332) at (170bp,70bp) [inner sep=2pt,black!50] {\tiny $(1,3^2,2)$};
  \node (d9_1341) at (240bp,70bp) [inner sep=2pt,black!50] {\tiny $(1,3,4,1)$};
  \node (d9_1431) at (310bp,70bp) [inner sep=2pt,black!50] {\tiny $(1,4,3,1)$};
  \node (d9_1422) at (380bp,70bp) [inner sep=2pt,black!50] {\tiny $(1,4,2^2)$};
  \node (d9_1521) at (450bp,70bp) [inner sep=2pt,black!50] {\tiny $(1,5,2,1)$};
  \node (d9_1611) at (520bp,70bp) [inner sep=2pt] {\tiny $(1,6,1^2)$};
  
  \node at (620bp,70bp) [] {\small $R_9^4 = [3,9]$};

  \node (d10_1234) at (65bp,10bp) [inner sep=2pt] {\tiny $(1,2,3,4)$};
  \node (d10_1333) at (116bp,10bp) [inner sep=2pt,black!50] {\tiny $(1,3^3)$};
  \node (d10_1342) at (167bp,10bp) [inner sep=2pt,black!50] {\tiny $(1,3,4,2)$};
  \node (d10_1432) at (225bp,10bp) [inner sep=2pt,black!50] {\tiny $(1,4,3,2)$};
  \node (d10_1351) at (285bp,10bp) [inner sep=2pt,black!50] {\tiny $(1,3,5,1)$};
  \node (d10_1441) at (340bp,10bp) [inner sep=2pt,black!50] {\tiny $(1,4^2,1)$};
  \node (d10_1531) at (395bp,10bp) [inner sep=2pt,black!50] {\tiny $(1,5,3,1)$};
  \node (d10_1522) at (450bp,10bp) [inner sep=2pt,black!50] {\tiny $(1,5,2^2)$};
  \node (d10_1621) at (505bp,10bp) [inner sep=2pt,black!50] {\tiny $(1,6,2,1)$};
  \node (d10_1711) at (560bp,10bp) [inner sep=2pt] {\tiny $(1,7,1^2)$};
  
    \node at (640bp,10bp) [] {\small $R_{10}^4 = [3,11]$};

  \draw [->] (d4_1111) -- (d5_1211);
  \draw [->] (d5_1211) -- (d6_1311);
  \draw [->] (d5_1211) --node[left]{\tiny $+1$} (d6_1221);
  \draw [->] (d6_1221) --node[left]{\tiny $+2$} (d7_1222);
  \draw [->] (d7_1222) --node[left]{\tiny $+1$} (d8_1232);
  \draw [->] (d8_1232) --node[left]{\tiny $+2$} (d9_1233);
  \draw [->] (d9_1233) --node[left]{\tiny $+2$} (d10_1234);
  \draw [->] (d6_1311) -- (d7_1411);
  \draw [->] (d7_1411) -- (d8_1511);
  \draw [->] (d8_1511) -- (d9_1611);
  \draw [->] (d9_1611) -- (d10_1711);
  
  \draw [->,black!50] (d9_1431) -- (d10_1531);
  \draw [->,black!50] (d9_1431) -- (d10_1441);
  \draw [->,black!50] (d8_1232) -- (d9_1332);
  \draw [->,black!50] (d9_1611) -- (d10_1621);
  \draw [->,black!50] (d9_1332) -- (d10_1342);
  \draw [->,black!50] (d8_1322) -- (d9_1332);
  \draw [->,black!50] (d8_1331) -- (d9_1332);
  \draw [->,black!50] (d9_1341) -- (d10_1441);
  \draw [->,black!50] (d8_1331) -- (d9_1341);
  \draw [->,black!50] (d8_1331) -- (d9_1431);
  \draw [->,black!50] (d6_1221) -- (d7_1321);
  \draw [->,black!50] (d8_1421) -- (d9_1431);
  \draw [->,black!50] (d9_1422) -- (d10_1522);
  \draw [->,black!50] (d9_1521) -- (d10_1621);
  \draw [->,black!50] (d8_1511) -- (d9_1521);
  \draw [->,black!50] (d9_1431) -- (d10_1432);
  \draw [->,black!50] (d6_1221) -- (d7_1231);
  \draw [->,black!50] (d9_1341) -- (d10_1351);
  \draw [->,black!50] (d7_1222) -- (d8_1322);
  \draw [->,black!50] (d9_1332) -- (d10_1333);
  \draw [->,black!50] (d7_1321) -- (d8_1331);
  \draw [->,black!50] (d9_1422) -- (d10_1432);
  \draw [->,black!50] (d8_1322) -- (d9_1422);
  \draw [->,black!50] (d9_1521) -- (d10_1522);
  \draw [->,black!50] (d9_1233) -- (d10_1333);
  \draw [->,black!50] (d7_1321) -- (d8_1421);
  \draw [->,black!50] (d9_1521) -- (d10_1531);
  \draw [->,black!50] (d6_1311) -- (d7_1321);
  \draw [->,black!50] (d7_1231) -- (d8_1331);
  \draw [->,black!50] (d7_1321) -- (d8_1322);
  \draw [->,black!50] (d9_1332) -- (d10_1432);
  \draw [->,black!50] (d7_1411) -- (d8_1421);
  \draw [->,black!50] (d8_1421) -- (d9_1521);
  \draw [->,black!50] (d9_1341) -- (d10_1342);
  \draw [->,black!50] (d8_1421) -- (d9_1422);
  \draw [->,black!50] (d7_1231) -- (d8_1232);
\end{tikzpicture}
\caption{\label{fig:ranges} The ranges $R_d^4$ for $d=4,\ldots,10$. In the picture, the edges on the left are labeled with the corresponding increase of the genus.}
\end{center}
\end{figure}


\section{\texorpdfstring{Unattainable aCM genera in $R_d$}{Unattainable aCM genera in R\_d}}\label{sec:gaps}

Recall that we are denoting by $R_d$ the range $\big[0,\binom{d-1}{2}\big]$ and that $G_d\subseteq R_d$.

\begin{definition}\label{def:gap}
An integer in $R_d \setminus G_d$ is called a {\em gap in $R_d$}.
\end{definition}

\begin{example}\label{ex:gaps ovvi}
The integers in the range $\big[\binom{d-2}{2}+1,\binom{d-1}{2}-1\big]$ are gaps in $R_d$. More generally, every integer of $R_d$ not contained in any $(d,s)$-range is a gap.
\end{example}

Next result allows us to characterize the consecutive $(d,s)$-ranges that are \emph{separated}, i.e.~ranges $R_d^s$ and $R_d^{s+1}$ such that $\mathsf{g}^{s+1} - \max(G_d^s) > 1$. 

\begin{proposition}\label{easy gaps}
For any $d > 2$, we have
\[
\max(G_d^s) < \mathsf{g}^{s+1}-1 \quad \Longleftrightarrow\quad \frac{2d+1-\sqrt{8d-15}}{2}< s \leq d-1.
\]
Thus, the integers in $[\max(G_d^s)+1,{\mathsf g}^{s+1}-1]$ are gaps in $R_d$, for $\frac{2d+1-\sqrt{8d-15}}{2}< s \leq d-1$.
\end{proposition}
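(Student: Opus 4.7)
The plan is to substitute the closed-form expression from \eqref{eq:max} into the inequality $\max(G_d^s) < \mathsf{g}^{s+1}-1$ and reduce it to a simple quadratic in $d-s$, then solve. The only subtlety is that \eqref{eq:max} holds only in the range $s \in \{\lfloor d/2\rfloor+1,\dots, d\}$, so I will need to argue that the algebraic solution automatically lies in this range, and handle the case $s \leq \lfloor d/2 \rfloor$ by producing an explicit witness O-sequence.

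The core computation runs as follows. Since $\mathsf{g}^{s+1}=\binom{s}{2}$ and by \eqref{eq:max} $\max(G_d^s)=\binom{s-1}{2}+\binom{d-s}{2}$, using $\binom{s}{2}-\binom{s-1}{2}=s-1$ one obtains
\[
\mathsf{g}^{s+1}-1-\max(G_d^s) \;=\; (s-2)-\tbinom{d-s}{2}.
\]
So the inequality becomes $(d-s)(d-s-1)<2(s-2)$. Setting $k:=d-s$, this is the quadratic inequality $k^2+k+4-2d<0$, whose positive root is $\frac{-1+\sqrt{8d-15}}{2}$. Hence the condition unwinds to $d-s<\frac{-1+\sqrt{8d-15}}{2}$, i.e.\ $s>\frac{2d+1-\sqrt{8d-15}}{2}$.

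It remains to justify the range restriction and to handle $s\leq \lfloor d/2\rfloor$. A short manipulation shows that $\frac{2d+1-\sqrt{8d-15}}{2}\geq d/2$ is equivalent to $d^2-6d+16\geq 0$, which holds for every $d$. Therefore every integer $s$ satisfying $s>\frac{2d+1-\sqrt{8d-15}}{2}$ automatically satisfies $s\geq \lfloor d/2\rfloor+1$, so the use of \eqref{eq:max} is legitimate; this yields the ``$\Leftarrow$'' direction, while ``$\Rightarrow$'' restricted to $s\geq \lfloor d/2\rfloor+1$ is the same computation read in reverse. For the remaining case $s\leq \lfloor d/2\rfloor$ (i.e.\ $d\geq 2s$), I exhibit the O-sequence $(1,d-2s+3,2^{s-2}) \in \mathcal{G}_d^s$: it is admissible since $d-2s+3\geq 2$ and $2^{\langle j\rangle}=2$ for every $j\geq 1$, and by \eqref{genus} its genus equals $2\sum_{j=2}^{s-1}(j-1)=(s-1)(s-2)$. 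A direct check (equivalent to $(s-1)(s-4)\geq -2$) shows $(s-1)(s-2)\geq \binom{s}{2}-1=\mathsf{g}^{s+1}-1$ for every $s\geq 2$, so $\max(G_d^s)\geq \mathsf{g}^{s+1}-1$ and the strict inequality fails; this completes the ``$\Rightarrow$'' direction. The gap statement is then immediate from Corollary~\ref{cor:ranges} and the monotonicity of $\mathsf{g}^{s'}$ and of $\max(G_d^{s'})$ in $s'$ within the formula range. The main obstacle is precisely this second case, i.e.\ producing the witness O-sequence outside the validity range of \eqref{eq:max}; everything else is direct algebra.
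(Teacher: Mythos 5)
Your core computation is the same as the paper's: both substitute \eqref{eq:max} into $\max(G_d^s) < \mathsf{g}^{s+1}-1$ and reduce to a quadratic inequality (the paper in $s$, you in $k=d-s$) with the same root $\frac{2d+1-\sqrt{8d-15}}{2}$, and both check that this threshold exceeds $d/2$ so that the relevant $s$ lie in the validity range of \eqref{eq:max}. Where you genuinely diverge is the complementary case $s\leq\lfloor d/2\rfloor$: the paper argues by downward propagation (if $\mathsf{g}_s(d)\geq\mathsf{g}^{s+1}-1$ then $\mathsf{g}_{s-1}(d)\geq\mathsf{g}^{s}-1$, using $\mathsf{g}^{s}=\mathsf{g}^{s+1}-(s-1)$ together with a bound on how the maximal genus drops when the length decreases), whereas you exhibit the explicit witness $(1,d-2s+3,2^{s-2})\in\mathcal{G}_d^s$ of genus $(s-1)(s-2)\geq\binom{s}{2}-1$. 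Your route is self-contained and arguably cleaner, since the paper's inequality $\mathsf{g}_{s}(d)\leq\mathsf{g}_{s-1}(d)+(s-2)$ is attributed to Proposition \ref{dim procedura paolo}, which compares different degrees at fixed length rather than different lengths at fixed degree, so it needs an extra thought; your witness requires none. (Trivial slip: $2^{\langle 1\rangle}=3$, not $2$; harmless because $h_1=d-2s+3\geq 3$ in your range.)

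The only thin spot is the closing ``thus gaps'' sentence. To conclude that an integer $g$ with $\max(G_d^s)<g<\mathsf{g}^{s+1}$ lies in no $G_d^{s'}$, Corollary \ref{cor:ranges} requires $\max(G_d^{s'})<g$ for \emph{every} $s'\leq s$, including $s'\leq\lfloor d/2\rfloor$, where neither \eqref{eq:max} nor your witness (which only bounds $\max(G_d^{s'})$ from below) gives an upper bound; your appeal to monotonicity ``within the formula range'' does not cover these $s'$. The repair is one line: for $s'<d$, lowering by $1$ the last entry $\geq 2$ of an O-sequence of length $s'$ and multiplicity $d$ and appending a trailing $1$ yields an admissible sequence of length $s'+1$, same multiplicity, and strictly larger genus, so $\max(G_d^{s'})$ is increasing in $s'$ over the whole range $2\leq s'\leq d$. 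With that sentence added (the paper is equally terse on this point), your argument is complete.
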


\begin{proof}
For $s \geq \left\lfloor\frac{d}{2}\right\rfloor + 1$, by \eqref{eq:max} we have:  
\[
\mathsf{g}_s(d) < \mathsf{g}^{s+1}-1 \quad \Longleftrightarrow\quad \tbinom{s-1}{2}+\tbinom{d-s}{2} < \tbinom{s}{2}-1.
\]
Hence 
\[
 {\mathsf g}_{s}(d) - {\mathsf g}^{s+1} + 1 = \tfrac{s^2-(2d+1)s+d^2-d+4}{2} < 0\ \Rightarrow\ \tfrac{2d+1-\sqrt{8d-15}}{2} < s < \tfrac{2d+1+\sqrt{8d-15}}{2},
 \]
and thus $\mathsf{g}_s(d) < \mathsf{g}^{s+1}-1$ if and only if $\frac{2d+1-\sqrt{8d-15}}{2}< s \leq d-1$, because $\frac{2d+1-\sqrt{8d-15}}{2} >\left\lfloor\frac{d}{2}\right\rfloor $, $\tfrac{2d+1+\sqrt{8d-15}}{2} > d-1$ and $\frac{2d+1-\sqrt{8d-15}}{2} > d-1$ implies $d < 3$.

To prove that there are no other pairs of separated ranges, we notice that $\mathsf{g}_s(d) \geq \mathsf{g}^{s+1}-1$ implies $\mathsf{g}_{s-1}(d) \geq \mathsf{g}^{s}-1$, for every $s$. Indeed, as $\mathsf{g}^{s} = \mathsf{g}^{s+1}-(s-1)$ and $\mathsf{g}_{s}(d) \leq \mathsf{g}_{s-1}(d) + (s-2)$ by Proposition \ref{dim procedura paolo}, we have
\[
\mathsf{g}_{s-1}(d) - \mathsf{g}^{s}+1 \geq \mathsf{g}_{s}(d) - (s-2) - \mathsf{g}^{s+1}+ (s-1) + 1 > \mathsf{g}_s(d) -\mathsf{g}^{s+1}+1 \geq 0. \qedhere
\]
\end{proof}

\begin{example}\label{ex:d=12}
For every $d\leq 11$, the gaps in $R_d$ are only those described in Proposition \ref{easy gaps}. For $d=12$, in addition to the gaps described in Proposition \ref{easy gaps}, we find by direct computation a unique further gap $\bar g=26$, belonging only to the range $R^8_{12}=[21,28]$.
\end{example}

\begin{example}\label{ex:d=15}
By a direct computation of the finite admissible O-sequences, we note that for $d=15$ the integer $\bar g=25$ belongs to the ranges $R^{6}_{15}$ and $R^{5}_{15}$. Nevertheless, whereas for each $\mathsf{h} \in R_{15}^5$ we have $g(\mathsf{h}) \neq 25$, there is $\overline{\mathsf{h}} = (1,3,3,4,2,2) \in R_{15}^6$ such that $g(\overline{\mathsf{h}}) = 25$.  
\end{example}

Example \ref{ex:d=15} suggests the following definition.

\begin{definition}
An integer in the range $R_d^s$ is called a {\em hole} of the range $R_d^s$ if it is not the arithmetic genus of an aCM curve $C$ of degree $d$ with $h$-vector of length $s$.
\end{definition}

\begin{remark}
Not every hole is a gap. For instance, Example \ref{ex:d=15} tells us that the integer $25$ is not a gap in $R_{15}$, although it is a hole of $R^{5}_{15}$. While Example \ref{ex:d=12} attests that the hole $26$  of $R^8_{12}$  is actually a gap in $R_{12}$.
\end{remark}

Notice that for $s = d-1,d-2,d-3$ there are no holes in $R_d^s$.
Now, we detect some values of $d$ and $s$ for which in the ranges $R_d^s$ there exist certain special gaps and we point out some particular holes which are also gaps, belonging to {\em parts} of different $(d,s)$-ranges not overlapping each other.

\begin{lemma}\label{lemmaHoles}
For every $d$ and $s$ such that $7 \leq \left\lfloor\frac{d}{2}\right\rfloor+1 \leq s \leq d-4$, the integers $\mathsf{g}_s(d)-(d-s-3),\ldots,\mathsf{g}_s(d)-1$ are holes in the range $R_d^s$.
\end{lemma}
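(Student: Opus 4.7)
The plan is to establish the stronger statement: every $\mathsf{h}\in\mathcal{G}_d^s$ distinct from $\mathsf{h}_s(d)$ satisfies $g(\mathsf{h})\leq \mathsf{g}_s(d)-(d-s-2)$. Since $g(\mathsf{h}_s(d))=\mathsf{g}_s(d)$, this forces the $d-s-3$ consecutive integers $\mathsf{g}_s(d)-(d-s-3),\ldots,\mathsf{g}_s(d)-1$ to be unattained as the genus of any O-sequence of length $s$ and multiplicity $d$, i.e., to be holes of $R_d^s$.

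First I would carry out a structural reduction. Recall $\mathsf{h}_s(d)=(1,2^{d-s},1^{2s-d-1})$, and fix $\mathsf{h}=(1,h_1,\ldots,h_{s-1})\in\mathcal{G}_d^s\setminus\{\mathsf{h}_s(d)\}$. By Proposition~\ref{dim procedura paolo}, $\mathsf{h}_s(d)$ is the $<$-maximum of $\mathcal{G}_d^s$, so at the highest index $\ell$ where $\mathsf{h}$ differs from $\mathsf{h}_s(d)$ the entry $h_\ell$ is strictly smaller than the corresponding entry of $\mathsf{h}_s(d)$. If $\ell>d-s$ then $h_\ell=0$, and admissibility propagates this to contradict $h_{s-1}\neq 0$. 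If $\ell<d-s$ then $h_\ell=1$, forcing $h_{\ell+1}\leq 1^{\langle\ell\rangle}=1$ and contradicting $h_{\ell+1}=2$. Hence $\ell=d-s$ and $h_{d-s}=1$, so
\[
\mathsf{h}=(1,h_1,\ldots,h_{d-s-1},1^{2s-d}).
\]

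Set $n:=d-s\geq 4$. The prefix $\mathsf{h}':=(1,h_1,\ldots,h_{n-1})$ is a finite O-sequence in $\mathcal{G}_{2n}^n$ and, splitting the genus sum over the prefix and the trailing ones,
\[
g(\mathsf{h})=g(\mathsf{h}')+\sum_{j=n}^{s-1}(j-1)=g(\mathsf{h}')+\tbinom{s-1}{2}-\tbinom{n-1}{2}.
\]
So it suffices to bound $\max(G_{2n}^n)$. Iterating Proposition~\ref{dim procedura paolo} from $(1^n)=\max(\mathcal{G}_n^n)$, the first $n-1$ iterations can only increment the leftmost non-saturated position (since $1^{\langle t\rangle}=1$), producing in turn $(1,2,1^{n-2}),(1,2^2,1^{n-3}),\ldots,(1,2^{n-1})$. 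At the $n$-th iteration, the identity $2^{\langle n-2\rangle}=2$ (valid for $n\geq 4$) rules out $\imath=n-1$, and the largest admissible index becomes $\imath=2$ (using $2^{\langle 1\rangle}=3$); this produces $\max(\mathcal{G}_{2n}^n)=(1,2,3,2^{n-3})$, whose genus is $2\binom{n-1}{2}+1$. By Theorem~\ref{th:degrevlex}, this equals $\max(G_{2n}^n)$.

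Combining the two displayed inequalities,
\[
g(\mathsf{h})\leq 2\tbinom{n-1}{2}+1+\tbinom{s-1}{2}-\tbinom{n-1}{2}=\tbinom{s-1}{2}+\tbinom{n-1}{2}+1,
\]
and a direct calculation gives $\mathsf{g}_s(d)-\bigl(\tbinom{s-1}{2}+\tbinom{n-1}{2}+1\bigr)=\tbinom{n}{2}-\tbinom{n-1}{2}-1=n-2=d-s-2$, as required. The main obstacle is the structural reduction: the careful application of Macaulay's admissibility near the ``1-tail'' of the O-sequence pins $\mathsf{h}$ to the shape above, after which everything reduces to the iterative construction of Proposition~\ref{dim procedura paolo} on the auxiliary class $\mathcal{G}_{2n}^n$ and the arithmetic identity just displayed.
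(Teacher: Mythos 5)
Your proof is correct, but it follows a genuinely different route from the paper's. The paper argues locally at the top of the poset: since $\mathsf{h}_s(d)=(1,2^{d-s},1^{2s-d-1})$ is the maximum of $\mathcal{G}_d^s$, it checks that the only elements of $\mathcal{G}_d^s$ directly comparable with it are $\mathsf{h}_s(d)-\mathsf{e}_{d-s}+\mathsf{e}_1$ and $\mathsf{h}_s(d)-\mathsf{e}_{d-s}+\mathsf{e}_2$, whose genera are $\mathsf{g}_s(d)-(d-s-1)$ and $\mathsf{g}_s(d)-(d-s-2)$, and then invokes Corollary \ref{cor:genusOrder} to bound every other genus by the larger of these two. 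You instead prove the same sharp bound globally: using the $<$-maximality of $\mathsf{h}_s(d)$ from \eqref{eq:max} you show every other element of $\mathcal{G}_d^s$ has the fixed tail $1^{2s-d}$ and a prefix lying in $\mathcal{G}_{2n}^{n}$ with $n=d-s$, split the genus accordingly, and compute $\max(G_{2n}^n)=2\binom{n-1}{2}+1$ by iterating Proposition \ref{dim procedura paolo} and applying Theorem \ref{th:degrevlex}. Your version is longer but buys something: it sidesteps the reachability point the paper leaves implicit (namely that every non-maximal vertex is $\prec$-below one of the two predecessors of the maximum), it identifies the exact second-largest genus and the shape of all non-maximal O-sequences, and the auxiliary maximum $(1,2,3,2^{n-3})$ you compute is precisely the larger of the paper's two predecessors, truncated. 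Two cosmetic remarks: in the case $\ell<d-s$ you should also dispose of $h_\ell=0$ (same contradiction, since then $h_{\ell+1}\leq 1<2$), and at the last iteration the identity $2^{\langle t\rangle}=2$ for $t\geq 2$ rules out every index $\imath\in\{3,\ldots,n-1\}$, not just $\imath=n-1$; neither affects the validity of the argument.
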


\begin{proof}
As we saw in \eqref{eq:max}, the maximal genus $\mathsf{g}_s(d)$ in $R_d^s$ arises from the O-sequence $\mathsf{h}_s(d) = (1,2^{d-s},1^{2s-d-1})$. In the graph $\mathcal{G}_d^s$, the only edges involving this vertex are $\mathsf{e}_{d-2}-\mathsf{e}_1$ and $\mathsf{e}_{d-s}-\mathsf{e}_2$. Hence, by Corollary \ref{cor:genusOrder}, for each $\mathsf{h} \in \mathcal{G}_d^s\setminus \{\mathsf{h}_s(d)\}$
\[
\begin{split}
g(\mathsf{h}) &{}\leq \max \left\{ g\big(\mathsf{h}_s(d) - (\mathsf{e}_{d-s}-\mathsf{e}_1)\big) , g\big(\mathsf{h}_s(d) -(\mathsf{e}_{d-s}- \mathsf{e}_2)\big)\right\}\\
& {} = \max\left\{ \mathsf{g}_s(d) - (d-s-1), \mathsf{g}_s(d) - (d-s-2)\right\} = \mathsf{g}_s(d) - (d - s - 2). \qedhere
\end{split}
\]
\end{proof}

All the holes described in the previous lemma are surely gaps if we consider $s > \frac{2d+1-\sqrt{8d-15}}{2}$ as in Proposition \ref{easy gaps}. Indeed, is such cases these holes do not belong to any other range.

\begin{proposition}\label{2}
In the hypotheses of Lemma \ref{lemmaHoles}, for every $i=1,\ldots,d-s-3$, the hole $\mathsf{g}_s(d) - i$  is a gap if $s-1-\binom{d-s}{2}+i > 0$. More precisely,
\begin{enumerate}[(i)]
\item\label{it:2_i} the highest hole $\mathsf{g}_{d-4}(d) - 1 = \frac{d(d-11)}{2}+20$ is always a gap;
\item\label{it:2_ii} every hole described in Lemma \ref{lemmaHoles} is a gap if $s > \frac{2d-1-\sqrt{8d-31}}{2}$.
\end{enumerate}
\end{proposition}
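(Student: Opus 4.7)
The plan is to prove that $v := \mathsf{g}_s(d) - i$ is not the arithmetic genus of any aCM curve of degree $d$, under the hypothesis $s - 1 - \binom{d-s}{2} + i > 0$. Since $v$ is a hole of $R_d^s$ by Lemma \ref{lemmaHoles}, one has $v \notin G_d^s$, so it suffices to show that $v \notin R_d^{s'}$ (and hence $v \notin G_d^{s'}$) for every $s' \neq s$. I would split this into the two cases $s' > s$ and $s' < s$.

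For $s' > s$, the direct computation
\[
\mathsf{g}^{s+1} - v = \tbinom{s}{2} - \tbinom{s-1}{2} - \tbinom{d-s}{2} + i = s - 1 - \tbinom{d-s}{2} + i
\]
shows that the hypothesis of the statement is precisely $v < \mathsf{g}^{s+1} = \min R_d^{s+1}$. Since $\mathsf{g}^{s'} = \binom{s'-1}{2}$ is strictly increasing in $s'$, this yields $v < \min R_d^{s'}$, hence $v \notin R_d^{s'}$, for every $s' \geq s+1$.

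For $s' < s$, one needs $v > \mathsf{g}_{s'}(d) = \max R_d^{s'}$. By formula \eqref{eq:max}, $\mathsf{g}_{s'}(d) = \binom{s'-1}{2} + \binom{d-s'}{2}$ whenever $s' \geq \lfloor d/2 \rfloor + 1$, and this expression is nondecreasing in $s'$ on that range (its discrete derivative $2s' - d - 1$ is nonnegative there). Hence it suffices to verify $v > \mathsf{g}_{s-1}(d)$, which via the identity $\mathsf{g}_s(d) - \mathsf{g}_{s-1}(d) = 2s - d - 2$ and the constraint $i \leq d - s - 3$ from Lemma \ref{lemmaHoles} reduces to an explicit numerical check using the standing hypotheses $s \geq \lfloor d/2 \rfloor + 1$ and $s \geq 7$. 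For $s' < \lfloor d/2 \rfloor + 1$, an iterated application of Proposition \ref{dim procedura paolo} gives the required upper bound on $\max G_d^{s'}$. This $s' < s$ step, and in particular its uniform treatment of the small lengths, is the main technical obstacle.

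Finally, part (i) follows by specializing to $s = d-4$ and $i = 1$: the hypothesis becomes $(d-4) - 1 - \binom{4}{2} + 1 = d - 10$, which is positive because Lemma \ref{lemmaHoles} forces $d \geq 12$; simplifying $\binom{d-5}{2} + \binom{4}{2} - 1$ then yields $\frac{d(d-11)}{2} + 20$. Part (ii) follows by determining the range of $s$ ensuring that the hypothesis of the main claim is compatible with the admissible range of $i$. Rewriting this condition with $t = d - s$ and isolating $t$ from $t^2 - t < 2(d-4)$ gives $t < \frac{1 + \sqrt{8d - 31}}{2}$, which translates to $s > \frac{2d - 1 - \sqrt{8d - 31}}{2}$.
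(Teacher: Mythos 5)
Your key computation is, in effect, the paper's entire proof: the hypothesis $s-1-\binom{d-s}{2}+i>0$ is exactly the inequality $\mathsf{g}_s(d)-i<\mathsf{g}^{s+1}\leq\mathsf{g}^{s'}$ for all $s'>s$, and your verifications of (i) and (ii) coincide with the paper's ``direct computation'' (its $\sqrt{8d-31}$ threshold comes, as in your derivation, from the extremal index $i=d-s-3$). Where you differ is that you also insist on excluding realizability by shorter $h$-vectors, i.e.\ on the inequality $\mathsf{g}_s(d)-i>\max(G_d^{s'})$ for every $s'<s$; the paper's proof is silent on this point, so raising it is to your credit.

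But you do not actually prove that part, and you say so yourself, which leaves a genuine gap. Two concrete remarks. First, the ``explicit numerical check'' does go through and should be written out, since it is precisely where the hypotheses enter: if the displayed inequality is satisfiable for some $i\leq d-s-3$, then $\binom{d-s}{2}\leq d-5$, whence $3(d-s)\leq d$ (for $d-s=4$ use $d\geq 12$), so $i\leq d-s-3\leq 2s-d-3<\mathsf{g}_s(d)-\mathsf{g}_{s-1}(d)$ and therefore $\mathsf{g}_s(d)-i>\mathsf{g}_{s-1}(d)$. Second, your treatment of the lengths $s'\leq\lfloor d/2\rfloor$ is not yet an argument: iterating Proposition \ref{dim procedura paolo} bounds each increment only by $s'-2$, and the resulting estimate $\max(G_d^{s'})\leq\binom{s'-1}{2}+(d-s')(s'-2)$ is too weak (for $d=12$, $s'=6$ it gives $34$, above the hole $26$). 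A clean way to close this case is to show that $\max(G_d^{s'})$ is nondecreasing in $s'$: if $k$ is the largest index with $h_k\geq 2$, replace $\mathsf{h}$ by $(1,h_1,\ldots,h_k-1,\ldots,h_{s'-1},1)$, which is admissible, has the same multiplicity, length $s'+1$ and genus increased by $s'-k\geq 0$; then every $s'<s$ reduces to the single comparison with $\mathsf{g}_{s-1}(d)$ above. Finally, be aware that your justification of (ii) (compatibility of the hypothesis with \emph{some} admissible $i$) only shows that the holes satisfying the displayed inequality are gaps, not literally every hole: for $(d,s)=(15,10)$ the bound $s>\tfrac{29-\sqrt{89}}{2}$ holds, yet the top hole $\mathsf{g}_{10}(15)-1=45=\mathsf{g}^{11}$ is attained by $(1,5,1^9)$, hence is not a gap; covering all $i\geq 1$ would require $s>\binom{d-s}{2}$. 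This looseness is already in the paper's statement, so your computation is faithful to it, but your argument (like the paper's) does not deliver the literal claim for every hole.
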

\begin{proof}
The hole $\mathsf{g}_s(d) - i$ is a gap if $\mathsf{g}_s(d) - i < \mathsf{g}^{s+1}$, i.e.~$\binom{s}{2} - \binom{s-1}{2} - \binom{d-s}{2}+i = s-1 - \binom{d-s}{2}+i > 0$.
The proof of \emph{(\ref{it:2_i})} and \emph{(\ref{it:2_ii})} is a direct computation.
\end{proof}

\begin{example}
By Proposition \ref{2}, we find the following gaps in $R_{28}$: the gap $258$ belonging only to the range $R_d^{24}$, $240$ and $239$ belonging only to $R_d^{23}$, $224$, $223$ and $222$ belonging only to $R_d^{22}$ and $207$, $208$ and $209$ belonging to $R^{21}_{28}$. Anyway, by a direct computation we find also the gap $188$, actually the minimal one in $R_{28}$. 
\end{example}


\section{\texorpdfstring{Computation of the aCM genera for curves of degree $d$}{Computation of the aCM genera for curves of degree d}}
\label{sec:computation}

Proposition \ref{2} gives a characterization of the gaps in $R_d$ belonging to the {\em last part} of a $(d,s)$-range. We did not find analogous conditions for gaps belonging to the {\em first part} of a $(d,s)$-range. In particular, it seems hard to give a characterization of the minimal gap. Hence, we will look for an algorithmic method to recognize the gaps in $R_d$, avoiding to construct all the finite O-sequences of multiplicity $d$ thanks to a sort of {\em continuity} in the generation of the arithmetic genera. Denote by $G_d+a$ the set of all arithmetic genera of the aCM curves of degree $d$ augmented by a non-negative integer $a$.

\begin{lemma}\label{gradi successivi}
$G_d \supseteq \bigcup\limits_{j=1}^{d-1} \left(G_{j}+\tbinom{d-j}{2}\right)$.
\end{lemma}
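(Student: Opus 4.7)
My approach is to produce a direct construction: starting from an O-sequence $\mathsf{h} = (1, h_1, \ldots, h_{s-1})$ of multiplicity $j$ realizing a given $g \in G_j$, I will build an explicit O-sequence $\mathsf{h}'$ of multiplicity $d$ whose genus is exactly $g + \binom{d-j}{2}$. The natural candidate, reading off formula \eqref{genus}, is to shift up the first $d-j$ entries after $h_0$ by one: setting $h_i = 0$ for $i \geq s$, define $h_i' := h_i + 1$ for $1 \leq i \leq d-j$ and $h_i' := h_i$ otherwise. The extra units contribute $\sum_{i=2}^{d-j}(i-1) = \binom{d-j}{2}$ to the genus (with empty sum when $d-j=1$) and $d-j$ to the multiplicity, so the bookkeeping for $e(\mathsf{h}')$ and $g(\mathsf{h}')$ is automatic.

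The only step that really requires work is admissibility of $\mathsf{h}'$. I would check $h'_{t+1} \leq (h'_t)^{\langle t \rangle}$ for $t \geq 1$ by splitting into three regimes: $t < d-j$, where both $h'_t$ and $h'_{t+1}$ have been bumped by $1$; $t = d-j$, where only $h'_t$ has been bumped; and $t > d-j$, where neither entry has been touched. The third regime is simply the admissibility of $\mathsf{h}$; the second uses the monotonicity $h_t^{\langle t \rangle} \leq (h_t+1)^{\langle t \rangle}$ together with the original bound $h_{t+1}\leq h_t^{\langle t\rangle}$.

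The main obstacle, modest as it is, lies in the first regime. What I need is the integer inequality $(a+1)^{\langle t \rangle} \geq a^{\langle t\rangle} + 1$ valid for every $a \geq 0$ and $t \geq 1$, which follows at once from the strict inequality $(a+1)^{\langle t\rangle} > a^{\langle t\rangle}$ already recorded after formula \eqref{expansion}, since both sides are non-negative integers (the edge case $a = 0$ reduces to $1^{\langle t\rangle} = 1 \geq 0+1$). With this in hand,
\[
h'_{t+1} \;=\; h_{t+1} + 1 \;\leq\; h_t^{\langle t \rangle} + 1 \;\leq\; (h_t+1)^{\langle t \rangle} \;=\; (h'_t)^{\langle t \rangle},
\]
which closes the delicate case.

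Observe that when $d-j = 1$ the construction recovers exactly Remark \ref{rem:prime osservazioni}(iii): only $h_1$ is increased, and $\binom{1}{2}=0$ confirms that the genus is unchanged. Thus the lemma should be viewed as the natural $(d-j)$-step analogue of that remark, where adding $1$ to each of the positions $1,2,\ldots,d-j$ at once raises the multiplicity by $d-j$ and the genus by precisely $\binom{d-j}{2}$, the minimal possible contribution from $d-j$ consecutive edges in $\mathcal{G}$ starting at position $1$.
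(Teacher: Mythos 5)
Your proof is correct and follows essentially the same route as the paper: both add $1$ to the entries in positions $1,\ldots,d-j$ of an O-sequence realizing $g\in G_j$, which raises the multiplicity by $d-j$ and the genus by exactly $\tbinom{d-j}{2}$. The only difference is cosmetic -- the paper performs the increments one position at a time (each step being admissible because the previous bump relaxes the Macaulay bound), whereas you do them simultaneously and verify admissibility via the inequality $(a+1)^{\langle t\rangle}\geq a^{\langle t\rangle}+1$; your case analysis, including the entries beyond the original length, is complete.
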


\begin{proof}
Let $(1,h_1,\ldots,h_{s-1})$ be an O-sequence of multiplicity $j<d$ corresponding to an aCM genus $g$. Assuming $h_{i}^{\langle i \rangle} > h_{i+1}$, for some $i\in\{1,\ldots,s-2\}$, we can consider the finite O-sequence $(1,h_1,\ldots,h_{i+1}+1,\ldots,h_{s-1})$ of multiplicity $j+1$, corresponding to the genus $g+i$. Then, we can take also the finite O-sequence $(1,h_1,\ldots,h_{i+1}+1, h_{i+2}+1,\ldots,h_{s-1})$ of multiplicity $j+2$, corresponding to the genus $g+i+(i+1)$, and so on. Performing this construction from $i=1$ until $d-j$, we reach the desired conclusion. 
\end{proof}

\begin{remark}\label{rem:gradi successivi}
By the proof of Lemma \ref{gradi successivi}, we can observe that the arithmetic genera determined by the O-sequences $(1,h_1,\ldots,h_{s-1})$ with $h_i\geq h_{i+1}$, for every $0<i<s-1$, are included in those detected by Lemma \ref{gradi successivi}. For example, we have:
\footnotesize
\[
\begin{split}
G_1&{}=G_2 = \{0\}, \quad G_3= G_2\cup (G_1+1)=\{0,1\},\\
G_4&{}=  G_3\cup (G_2+1) \cup (G_1+3)=\{0,1,3\},\\
G_5&{}=  G_4\cup (G_3+1) \cup (G_2+3) \cup (G_1+6)=\{0,1,2,3,6\},\\
G_6&{}=  G_5\cup (G_4+1) \cup (G_3+3) \cup (G_2+6) \cup (G_1+10)=
\{0,1,2,3,4,6,10\},\\
G_7&{}\supset G_6\cup (G_5+1) \cup (G_4+3) \cup (G_3+6) \cup (G_2+10) \cup (G_1+15)=
\{0,1,2,3,4,6,7,10,15\}.
\end{split}
\]
\normalsize
Note that for the multiplicity $d=7$, we lose the arithmetic genus $g=5$ which corresponds to the finite O-sequence $(1,2,3,1)$. 
\end{remark}

Now, we exploit Lemma \ref{gradi successivi} obtaining large sets of aCM genera. To this aim, we define an increasing sequence $\{m_d\}_{d\geq 1}$ by the following procedure:
\begin{algorithmic}
\If{$ d = 1$}
\State $m_1:=0$;
\Else
\State $M := m_{d-1}$;
\For{$k = 2,\ldots,d-1$}
\If{$\binom{k}{2}-1 \leq M$}
\State $M = \max\{M,m_{d-k} + \binom{k}{2}\}$;
\EndIf
\EndFor
\State $m_d := M$; 
\EndIf
\end{algorithmic}

\begin{example}\label{ex:compare}
In the following table, we list the values of the sequence $\{m_d\}_{d\geq 1}$ and compare them with the values of ${\mathsf g}^{\lceil\frac{d}{2}\rceil +2}$, for $1\leq d \leq 45$:
\begin{table}[!ht]
\begin{center}
{\tiny
\begin{tabular}{r| ccccccccccccccc }
$d$ & $1$ & $2$ & $3$ & $4$ & $5$ & $6$ & $7$ & $8$ & $9$ & $10$ &$11$ & $12$ & $13$ & $14$ & $15$ \\
\hline
$m_d$  & $0$ & $0$ & $1$ &$1$ & $3$& $4$ & $4$ & $7$ & $11$ & $13$ & $18$  & $19$ & $19$ & $25$ & $32$ \\
\hline
${\mathsf g}^{\lceil\frac{d}{2}\rceil +2}$ & $1$ & $1$ & $3$ & $3$ & $6$ & $6$ & $10$ & $10$ & $15$ & $15$ & $21$ & $21$ & $28$ & $28$ & $36$ \\
\multicolumn{1}{c}{} & &&&&&&&&&&&&&& \\
$d$ & $16$ & $17$ & $18$ & $19$ & $20$ & $21$ & $22$ & $23$ & $24$ & $25$ & $26$ & $27$ & $28$ & $29$ & $30$ \\
\hline
$m_d$ & $40$ & $43$ &$52$ & $62$ &$73$ & $85$& $89$ & $102$ & $116$ & $118$ & $133$ & $149$  & $166$ & $184$ & $203$ \\
\hline
${\mathsf g}^{\lceil\frac{d}{2}\rceil +2}$ & $36$ & $45$ & $45$ & $55$ & $55$ & $66$ & $66$ & $78$ & $78$ & $91$ & $91$ & $105$ & $105$ & $120$ & $120$  \\
\multicolumn{1}{c}{} & &&&&&&&&&&&&&& \\
$d$ & $31$ & $32$ & $33$ & $34$ & $35$ & $36$ & 37 & 38 & 39 & 40 & 41 & 42 & 43 & 44 & 45  \\
\hline
$m_d$ & $208$ & $228$ & $229$ & $229$ & $250$ & $272$ & 295 & 319 & 344 & 370 & 376 & 403 & 431 & 460 & 490 \\
\hline
${\mathsf g}^{\lceil\frac{d}{2}\rceil +2}$ & $136$ & $136$ & $153$ & $153$ & $171$ & $171$ & 190 & 190 & 210 & 210 & 231 & 231 & 253 & 253 & 271 \\
\end{tabular}}
\end{center}
\end{table}
\end{example}

\begin{theorem}[Continuity]\label{global continuity} 
For all $d\geq 1$, every integer in $\{0,\ldots,m_d\}$ is the arithmetic genus of an aCM curve of degree $d$, i.e.~$\{0,\ldots,m_d\}\subseteq G_d$, and $m_d \geq {\mathsf g}^{\lceil\frac{d}{2}\rceil +2}$, for every $d\geq 18$.
\end{theorem}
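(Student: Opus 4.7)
The plan is to prove the two assertions separately, each by induction on $d$.

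For the inclusion $\{0,\ldots,m_d\}\subseteq G_d$, I proceed by strong induction on $d$, the case $d=1$ being immediate since $m_1=0\in G_1=\{0\}$. For the inductive step, I track the integer $M$ in the procedure defining $m_d$ and establish the invariant $\{0,\ldots,M\}\subseteq G_d$ throughout the loop. Initially $M=m_{d-1}$, and the inductive hypothesis together with $G_{d-1}\subseteq G_d$ from Remark \ref{rem:prime osservazioni}(iii) gives $\{0,\ldots,m_{d-1}\}\subseteq G_d$. Inside the loop, whenever the condition $\binom{k}{2}-1\leq M$ holds, Lemma \ref{gradi successivi} applied with $j=d-k$ together with the inductive hypothesis on $m_{d-k}$ gives $\{\binom{k}{2},\ldots,m_{d-k}+\binom{k}{2}\}\subseteq G_d$, and the condition $\binom{k}{2}\leq M+1$ is exactly what ensures that the union with the previous $\{0,\ldots,M\}$ is still a contiguous interval $\{0,\ldots,\max(M,m_{d-k}+\binom{k}{2})\}$. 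Thus the invariant is preserved and the terminal value $M=m_d$ satisfies $\{0,\ldots,m_d\}\subseteq G_d$.

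For the lower bound $m_d\geq \mathsf{g}^{\lceil d/2\rceil+2}$, I induct on $d\geq 18$, taking $d\in\{18,19\}$ as base cases read from Example \ref{ex:compare} (indeed $m_{18}=52\geq 45=\mathsf{g}^{11}$ and $m_{19}=62\geq 55=\mathsf{g}^{12}$). For the inductive step with $d\geq 20$ I split on parity. If $d=2e$ is even then $\lceil d/2\rceil+2=e+2$, and since the procedure defining $m_d$ only raises $M$ the sequence is monotone non-decreasing, so $m_{2e}\geq m_{2e-1}\geq \mathsf{g}^{e+2}$ by the inductive hypothesis applied to the odd value $d-1=2(e-1)+1$. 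If $d=2e+1$ is odd, I select $k=e+1$ in the iteration: the condition $\binom{e+1}{2}-1\leq M$ holds because at entry $M\geq m_{2e}\geq \binom{e+1}{2}$ by the inductive hypothesis applied to $d-1=2e$, so the update yields $m_{2e+1}\geq m_e+\binom{e+1}{2}$.

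It then suffices to prove the auxiliary bound $m_e\geq e+1$ for every $e\geq 9$: plugging this into the odd case gives $m_{2e+1}\geq (e+1)+\binom{e+1}{2}=\binom{e+2}{2}=\mathsf{g}^{e+3}$, and $\lceil (2e+1)/2\rceil+2=e+3$, closing the inductive step. The auxiliary bound is itself a short induction: the base cases $m_9=11$, $m_{10}=13$, $m_{11}=18$ come from the table in Example \ref{ex:compare}, and for $e\geq 12$ the iteration with $k=3$ applies (its condition $2\leq M$ being trivial once $m_{e-1}\geq 2$), yielding $m_e\geq m_{e-3}+3\geq (e-2)+3=e+1$ by the inductive hypothesis on $m_{e-3}$.

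The genuinely delicate point is the choice $k=e+1$ in the odd inductive case: it is essentially the unique choice that simultaneously satisfies the arithmetic condition $\binom{k}{2}-1\leq m_{d-1}$ at the level only guaranteed by $m_{2e}\geq \binom{e+1}{2}$ and produces exactly the increment $\mathsf{g}^{e+3}-\mathsf{g}^{e+2}=e+1$ demanded by the target inequality. A larger $k$ would fail the entry condition, while a smaller one would force an unreasonably strong lower bound on the earlier value $m_e$.
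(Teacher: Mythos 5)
Your argument is correct. For the inclusion $\{0,\ldots,m_d\}\subseteq G_d$ you spell out, via the loop invariant, exactly what the paper compresses into one sentence (``by Lemma \ref{gradi successivi} and the definition of $m_d$''): the entry condition $\binom{k}{2}-1\leq M$ is precisely what keeps the union of intervals contiguous, and your use of Remark \ref{rem:prime osservazioni}(iii) for the initialization is interchangeable with the $j=d-1$ term of Lemma \ref{gradi successivi}. For the bound $m_d\geq \mathsf{g}^{\lceil d/2\rceil+2}$ your skeleton coincides with the paper's: reduce to odd $d$ by monotonicity, and in the odd case $d=2e+1$ exploit the same step $k=\lceil d/2\rceil=e+1$ of the recursion, whose admissibility is justified in both proofs by $M\geq m_{d-1}\geq \binom{e+1}{2}$. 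The genuine difference is how the resulting term $m_e$ is bounded below: the paper feeds the full statement back into the induction, i.e.\ $m_{e}\geq \mathsf{g}^{\lceil e/2\rceil+2}$, which requires $e\geq 18$, hence $d\geq 37$, hence the base cases $18\leq d\leq 36$ read from the table of Example \ref{ex:compare}; you instead prove the much weaker auxiliary bound $m_e\geq e+1$ for $e\geq 9$ (by a short separate induction using the $k=3$ step), which is exactly the increment $\binom{e+2}{2}-\binom{e+1}{2}$ needed, so your main induction starts at $d=18$ with only two tabulated base cases plus three for the auxiliary lemma. What your route buys is a much smaller reliance on computed values (five table entries instead of nineteen) at the cost of one extra lemma; the paper's route avoids any auxiliary statement but leans more heavily on the explicit table. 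Your closing heuristic about $k=e+1$ being ``essentially unique'' is only a side remark and, as qualified (relative to the bound $M\geq\binom{e+1}{2}$ alone), it is harmless.
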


\begin{proof} The first statement holds by Lemma \ref{gradi successivi} and by the definition of $m_d$. For the second affirmation, note that it is enough to consider odd degrees $d$. For $18\leq d\leq 36$, see the tables of Example \ref{ex:compare}. If $d\geq 37$, let $s:=\lceil\frac{d}{2}\rceil +2$. By construction and by induction, we know that $m_d \geq m_{d-1} \geq {\mathsf g}^{\lceil\frac{d-1}{2}\rceil +2}=\binom{s-2}{2}$. Hence, by the definition of $m_d$ we get
\[
m_d\geq \max\left\{m_{d-1},m_{d-(s-2)}+\tbinom{s-2}{2}\right\}.
\]
Being $d$ odd, we have $d-(s-2)=d-\lceil\frac{d}{2}\rceil= \lceil\frac{d}{2}\rceil -1 = s-3 \geq 18$. Thus, by induction we obtain $m_d\geq \binom{\lceil\frac{s-3}{2}\rceil+1}{2}+\binom{s-2}{2}$, because $m_{d-(s-2)}=m_{\lceil\frac{d}{2}\rceil-1}=m_{s-3}\geq {\mathsf g}^{\lceil\frac{s-3}{2}\rceil+2}$.

Note that $\binom{\lceil\frac{s-3}{2}\rceil+1}{2}+\binom{s-2}{2} \geq \binom{s-1}{2}$ if $\binom{\lceil\frac{s-3}{2}\rceil+1}{2}\geq s-2$, that is true for every $s\geq 10$.
\end{proof}

Theorem \ref{global continuity} gives a lower bound for the value assumed by $m_d$, for every $d\geq 18$. Anyway, we can obtain more information by a full application of Lemma \ref{gradi successivi} which, together with the algorithm $\textsc{genusSearch}$ (see Algorithm \ref{alg:genusTreeSearch}), provides an algorithm to compute all the arithmetic genera of the aCM curves of degree $d$, avoiding to construct all the finite O-sequences. The strategy consists of the following steps:
\begin{description}
\item[Step 1]\label{it:alg2step1} by Lemma \ref{gradi successivi}, we determine recursively the set of integers $\widetilde{G}_d \subset R_d$ that are certainly aCM genera . Let $\widetilde{G}_1 = \{0\}$, we have $\widetilde{G}_d = \bigcup_i \widetilde{G}_{i} + \binom{d-i}{2}$;
\item[Step 2]\label{it:alg2step2} by results in Section \ref{sec:gaps} we determine all the integers of $R_d$ that are certainly gaps;
\item[Step 3]\label{it:alg2step3} using algorithm $\textsc{genusSearch}$ (Algorithm \ref{alg:genusTreeSearch}) we investigate the remaining integers.
\end{description}

\begin{algorithm}[!ht]
\caption{\label{alg:ACMgenera} The algorithm for determining the aCM genera of curves with a given degree. A trial version of this algorithm is available at \href{http://www.paololella.it/HSC/Finite_O-sequences_and_ACM_genus.html}{\small \tt http://www.paololella.it/HSC/} \href{http://www.paololella.it/HSC/Finite_O-sequences_and_ACM_genus.html}{\small \tt Finite\_O-sequences\_and\_ACM\_genus.html}}
\begin{algorithmic}[1]
\Procedure{ACMgenera}{$d$}
\Require $d$, a positive integer.
\Ensure the list of all possible aCM genera of a curve of degree $d$.
\State $\textsf{genera} := \{\textnormal{genera determined applying recursively Lemma \ref{gradi successivi}}\}$;
\State $\textsf{gaps} := \{\textnormal{gaps determined applying Proposition \ref{easy gaps} and Proposition \ref{2}}\}$; 
\State $\textsf{undecided} := \left\{0,\ldots,\binom{d-1}{2}\right\} \setminus \big( \textsf{genera} \cup \textsf{gaps}\big)$;
\For{$s=2,\ldots,d-3$}
\State $g:= \min (\textsf{undecided})$;
\While{$g \leq \textsc{upperBound}(R_d^s)$}
\If{$g < \textsc{lowerBound}(R_d^s)$}
\State $\textsc{remove}(g,\textsf{undecided})$;
\State $\textsf{gaps} = \textsf{gaps} \cup \{g\}$;
\Else 
\State $\textsf{searching} := \Call{genusSearch}{g,\mathcal{T}_d^s}$;
\If{$\textsf{searching} \neq \emptyset$}
\State $\textsc{remove}(g,\textsf{undecided})$;
\State $\textsf{genera} = \textsf{genera} \cup \{g\}$;
\EndIf
\EndIf
\State $g = \textsc{next}(g,\textsf{undecided})$;
\EndWhile
\EndFor
\State \Return $\textsf{genera}$;
\EndProcedure
\end{algorithmic}
\end{algorithm}

\begin{table}[!ht]
\caption{\label{tab:generaDistribution} In this table, we report some numerical information about the integers in $G_d$ up to degree $250$. The first column contains the number and the percentage of values in $R_d$ which are aCM genera by an application of Lemma \ref{gradi successivi} (without computing the O-sequences); in the second column, the number and the percentage of gaps determined applying Proposition \ref{easy gaps} and Proposition \ref{2}; in the third column, the number and the percentage of values of $R_d$ for which we have to use the procedure \textsc{genusSearch} to decide whether they are aCM genera; in the last column, the cardinality of $G_d$ and its percentage with respect to $\vert R_d\vert$.}
\begin{center}
{\tiny
\begin{tabular}{c | c | c | c | c }	
{\small $\phantom{\big\vert}d\phantom{\big\vert}$} & {\small Certain genera} & {\small Certain gaps} & {\small Undecided values} & {\small $\vert G_d\vert$}\\ 
\hline
{\small$\phantom{\big\vert}25\phantom{\vert}$} & $ 176\ (63.77\%)$ &
			$88\ (31.88\%)$&
				$13\ (4.71\%)$& 
					$187\ (67.75\%)$\\ 
\hline
{\small$\phantom{\big\vert}50\phantom{\vert}$} & $ 835\ (71.00\%)$ &
			$289\ (24.57\%)$&
				$53\ (4.51\%)$& 
					$870\ (73.98\%)$\\ 
\hline
{\small$\phantom{\big\vert}75\phantom{\vert}$} & $ 2033\ (75.27\%)$ &
			$558\ (20.66\%)$&
				$111\ (4.11\%)$& 
					$2099\ (77.71\%)$\\ 
\hline
{\small$\phantom{\big\vert}100\phantom{\vert}$} & $ 3798\ (78.29\%)$ &
			$879\ (18.12\%)$&
				$175\ (3.61\%)$& 
					$3894\ (80.27\%)$\\ 
\hline
{\small$\phantom{\big\vert}125\phantom{\vert}$} & $ 6129\ (80.37\%)$ &
			$1244\ (16.31\%)$&
				$254\ (3.33\%)$& 
					$6261\ (82.10\%)$\\ 
\hline
{\small$\phantom{\big\vert}150\phantom{\vert}$} & $ 9040\ (81.99\%)$ &
			$1653\ (14.99\%)$&
				$334\ (3.02\%)$& 
					$9207\ (83.50\%)$\\ 
\hline
{\small$\phantom{\big\vert}175\phantom{\vert}$} & $ 12528\ (83.24\%)$ &
			$2094\ (13.91\%)$&
				$430\ (2.86\%)$& 
					$12734\ (84.61\%)$\\ 
\hline
{\small$\phantom{\big\vert}200\phantom{\vert}$} & $ 16610\ (84.31\%)$ &
			$2574\ (13.07\%)$&
				$518\ (2.63\%)$& 
					$16854\ (85.55\%)$\\ 
\hline
{\small$\phantom{\big\vert}225\phantom{\vert}$} & $ 21276\ (85.19\%)$ &
			$3084\ (12.35\%)$&
				$617\ (2.47\%)$& 
					$21560\ (86.32\%)$\\ 
\hline
{\small$\phantom{\big\vert}250\phantom{\big\vert}$} & $26530\ (85.92\%)$&
			$3623\ (11.73\%)$&
				$724\ (2.34\%)$& 
					$26856\ (86.98\%)$\\ 
\end{tabular}
}
\end{center}
\end{table}

\begin{table}[!ht]
\caption{\label{tab:computationTime} In this table, we report the results of a test of Algorithm \ref{alg:ACMgenera} up to degree $250$. The first three columns contain the elapsed time (in milliseconds) for \textbf{Step 1}, \textbf{Step 2} and \textbf{Step 3} of Algorithm \ref{alg:ACMgenera}. In the fourth column, there is the total time  for the execution (Step 1 + Step 2 + Step 3). The last column contains the time required for determining the set $G_d$ by performing a complete visit of the tree $\mathcal{T}_d$ (even for $d=75$, we obtain an Out Of Memory Error). The algorithms are implemented in the \texttt{Java} language and have been run on a MacBook Pro with an Intel Core 2 Duo 2.4 GHz processor.}
\begin{center}
{\tiny
\begin{tabular}{c | c | c | c | c | c }	
{\small $d$} & {\small \quad Step 1\quad} & {\small \quad Step 2\quad} & {\small \quad Step 3\quad} & {\small Algorithm \ref{alg:ACMgenera}} & {\small Visit $\mathcal{T}_d$ }\\ 
\hline
{\small$\phantom{\big\vert}25\phantom{\vert}$} & $ 37.336\,\text{ms} $ &
			$ 0.164\,\text{ms} $&
				$ 38.594\,\text{ms} $& $76.094\, \text{ms}$ & 
					$210.769\, \text{ms}$\\ 
\hline
{\small$\phantom{\big\vert}50\phantom{\vert}$} & $ 82.774\,\text{ms} $ &
			$0.208\,\text{ms} $&
				$212.868\,\text{ms} $& $295.850\, \text{ms}$ &
					$15155.87\, \text{ms}$\\ 
\hline
{\small$\phantom{\big\vert}75\phantom{\vert}$} & $ 21.734\,\text{ms} $ &
			$0.155\,\text{ms} $&
				$ 458.117\,\text{ms} $& $480.006\, \text{ms}$ & O.O.M. \\ 
\hline
{\small$\phantom{\big\vert}100\phantom{\vert}$} & $ 47.529\,\text{ms} $ &
			$ 0.103\,\text{ms} $&
				$ 1390.027\,\text{ms} $& $1437.659\, \text{ms}$ & O.O.M.\\ 
\hline
{\small$\phantom{\big\vert}125\phantom{\vert}$} & $ 104.683\,\text{ms} $ &
			$ 0.279\,\text{ms} $&
				$ 4684.598\,\text{ms} $& $4789.56\, \text{ms}$ &  O.O.M.\\ 
\hline
{\small$\phantom{\big\vert}150\phantom{\vert}$} & $ 207.936\,\text{ms} $ &
			$ 0.183\,\text{ms} $&
				$ 12610.461\,\text{ms} $& $12818.58\, \text{ms}$ & O.O.M.\\ 
\hline
{\small$\phantom{\big\vert}175\phantom{\vert}$} & $ 546.818\,\text{ms} $ &
			$0.227\,\text{ms} $&
				$ 37518.036\,\text{ms} $& $38065.081\, \text{ms}$ & O.O.M.\\ 
\hline
{\small$\phantom{\big\vert}200\phantom{\vert}$} & $665.378\,\text{ms} $ &
			$0.364\,\text{ms} $&
				$ 73552.564\,\text{ms} $& $74218.306\, \text{ms}$ & O.O.M.\\ 
\hline
{\small$\phantom{\big\vert}225\phantom{\vert}$} & $ 922.599\,\text{ms} $ &
			$0.36\,\text{ms} $&
				$ 169042.878\,\text{ms} $& $169965.837\, \text{ms}$ & O.O.M.\\ 
\hline
{\small$\phantom{\big\vert}250\phantom{\big\vert}$} & $1395.378\,\text{ms}$&
			$ 0.179\,\text{ms}$&
				$ 359836.564\,\text{ms}$& $361232.121\, \text{ms}$ & O.O.M.\\ 
\end{tabular}
}
\end{center}
\end{table}


\section{An application: Castelnuovo-Mumford regularity of curves with Cohen-Macaulay postulation}\label{sec:CMreg}

In this section, we show how the search algorithm of aCM genera (Algorithm \ref{alg:genusTreeSearch}) allows us to detect the minimal Castelnuovo-Mumford regularity $m_{d,g}^{\textnormal{aCM}}$ of a curve with Cohen-Macaulay postulation, given its degree $d$ and genus $g$. Moreover, by the Example \ref{even aCM} we give a negative answer to a question posed in \cite[Remark 2.5]{CDG}. A complete solution to the problem of detecting the minimal Castelnuovo-Mumford regularity of a scheme with a given Hilbert polynomial is described in \cite{CMreg}. 

Denoting by $\rho$ the {\em regularity} of a Hilbert function, i.e.~the minimal degree from which the Hilbert function and the Hilbert polynomial coincide, we can state the following:
\begin{proposition}\label{regACM}
\[
m_{d,g}^{\textnormal{aCM}} = \min \left\{\rho\ \Bigg\vert\ \begin{array}{l}\rho \text{ is the regularity of an aCM postulation}\\ \text{with Hilbert polynomial } dt+1-g \end{array}\right\} + 2
\]
\end{proposition}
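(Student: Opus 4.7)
The plan is to use the correspondence between finite O-sequences $\mathsf{h}$ of multiplicity $d$ and Hilbert functions of aCM curves recalled in Section~1, so that the minimum defining $m_{d,g}^{\text{aCM}}$ can be rewritten as a minimum over $\mathsf{h}$. Given such an $\mathsf{h}=(1,h_1,\ldots,h_{s-1})$ of length $s$, multiplicity $d$ and genus $g$ (computed by \eqref{genus}), I aim to establish the two identities
\[
\operatorname{reg}(C)=s \qquad\text{and}\qquad \rho(H_C)=s-2,
\]
where $C$ is any aCM curve with $h$-vector $\mathsf{h}$. The proposition is then immediate because minimizing $s$ over all admissible $\mathsf{h}$ with given $d$ and $g$ simultaneously computes $m_{d,g}^{\text{aCM}}=\min_C\operatorname{reg}(C)$ and $\min_H \rho(H)+2$.

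For the first identity I would invoke the standard description of the regularity of an aCM projective scheme via its artinian reduction. Taking $A$ to be the quotient of the coordinate ring $S/I_C$ by a regular sequence of two general linear forms, the Hilbert function of $A$ is precisely $\mathsf{h}$, whence $\operatorname{reg}(A)=s-1$ (the last index carrying a nonzero entry, by $h_{s-1}\neq 0$). Since $C$ is Cohen--Macaulay, the regular sequence preserves regularity, so $\operatorname{reg}(S/I_C)=\operatorname{reg}(A)=s-1$, and hence $\operatorname{reg}(\mathcal{I}_C)=s$.

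For the second identity I would compute $\rho$ directly from the definition of $H_C$ as the double integral of $\mathsf{h}$ given in Section~1. Since $\mathsf{h}(t)=0$ for $t\geq s$, the partial sum $H_Z(t)=\sum_{j=0}^{s-1}h_j=d$ is constant for $t\geq s-1$, so $H_C$ has slope $d$ on $\{t\geq s-2\}$. A short manipulation using \eqref{genus} shows $H_C(s-1)=d(s-1)+1-g=p_C(s-1)$; the linearity then propagates the equality $H_C(t)=p_C(t)$ to every $t\geq s-2$. On the other hand $H_C(s-3)-p_C(s-3)=h_{s-1}\geq 1$, so the equality fails for $t=s-3$ and $\rho=s-2$ exactly. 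The main subtlety I expect is the origin of the shift by $2$: one unit is the classical gap between the regularity of the coordinate ring and that of the ideal sheaf, while the second arises because, thanks to the specific form of \eqref{genus}, $H_C$ coincides with $p_C$ already one step before the first difference $H_Z$ becomes constant, so the point of postulation regularity is not $s-1$ but $s-2$.
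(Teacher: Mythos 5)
Your two computations are correct and match the discussion that follows the proposition in the paper: for an aCM curve $C$ with $h$-vector $\mathsf{h}$ of length $s$ one indeed has $\operatorname{reg}(\mathcal{I}_C)=s$ (artinian reduction plus preservation of regularity along a linear regular sequence) and $\rho(H_C)=s-2$ (your verification that $H_C(s-2)=p_C(s-2)$ while $H_C(s-3)-p_C(s-3)=h_{s-1}\geq 1$ is exactly right). However, there is a genuine gap at the very first step, where you rewrite the minimum defining $m_{d,g}^{\textnormal{aCM}}$ as a minimum over O-sequences, i.e.\ over \emph{aCM} curves. The quantity $m_{d,g}^{\textnormal{aCM}}$ is the minimal Castelnuovo-Mumford regularity of a curve \emph{with Cohen-Macaulay postulation}: the curve itself need not be arithmetically Cohen-Macaulay, only its Hilbert function must be an aCM function. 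Your argument therefore proves only the upper bound $m_{d,g}^{\textnormal{aCM}}\leq \min_f \rho(f)+2$ (achieved by the aCM curves), and the equality $m_{d,g}^{\textnormal{aCM}}=\min_{C\ \textnormal{aCM}}\operatorname{reg}(C)$ that you treat as immediate is precisely what requires proof: a priori a non-aCM curve with the same aCM postulation $f$ could have regularity $\rho(f)+1$.

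This missing lower bound is the actual content of the paper's proof, which invokes \cite[Proposition 2.4]{CDG}: for a curve $X$ whose Hilbert function is an aCM function $f$ of regularity $\rho$, the Castelnuovo-Mumford regularity of $X$ is strictly greater than $\rho+1$, hence at least $\rho+2$; combined with the aCM curve attaining $\rho+2$, the minimum over \emph{all} curves with postulation $f$ is exactly $\rho+2$, and minimizing over $f$ gives the statement. Note that this strict inequality is not a formality one can skip: the general bound only gives $\operatorname{reg}(X)\geq \rho+1$, and the paper's Example \ref{even aCM} exhibits a non-aCM surface with aCM postulation whose regularity equals $\rho+1$, so the improvement to $\rho+2$ is specific to the curve (odd Hilbert-polynomial degree) case and must be supplied, either by citing \cite[Proposition 2.4]{CDG} as the paper does or by an independent argument, which your proposal does not contain.
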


\begin{proof}
Let $f$ be an aCM postulation with Hilbert polynomial $dt+1-g$ and regularity $\rho$. Then, the minimal possible Castelnuovo-Mumford regularity of a curve with Hilbert function $f$ is $\rho+2$. As a matter of fact, by \cite[Proposition 2.4]{CDG} this regularity is strictly greater than $\rho+1$ and if the curve is aCM, it is exactly $\rho+2$.
\end{proof}

By Proposition \ref{regACM}, the value of $m_{d,g}^{\textnormal{aCM}}$ is determined by applying Algorithm \ref{alg:genusTreeSearch} in order to find an O-sequence $\mathsf{h}$ of multiplicity $d$ and $g(\mathsf{h}) = g$ with the shortest possible length. Notice that if the length of $\mathsf{h}$ is $s$, then the regularity of $\Sigma^2 \mathsf{h}$ is $s-2$. Thus, we can rewrite the statement in Proposition \ref{regACM} as
\[
m_{d,g}^{\textnormal{aCM}} = \min \left\{ s \ \Bigg\vert\ \begin{array}{l}s \text{ is the length of an O-sequence $\mathsf{h}$}\\ \text{with multiplicity } d \text{ and } g(\mathsf{h}) = g \end{array}\right\}.
\]

\begin{example}
Let us consider the curves of degree $d=15$ and genus $g=32$. There are four O-sequences of multiplicity $d$ corresponding to aCM curves of genus $g$:
\[
\begin{array}{llll}
\mathsf{h}_1 = (1,4,3,2,1,1,1,1,1), &&& \mathsf{h}_3  = (1,2,3,4,2,1,1,1),\\
\mathsf{h}_2 = (1,3,3,2,2,2,1,1), &&& \mathsf{h}_4  = (1,3,5,1,1,1,1,1,1).
\end{array}
\]
Hence, the minimal Castelnuovo-Mumford regularity $m_{d,g}^{\textnormal{aCM}}$ is $8$. Applying the results of \cite{CMreg}\hfill (see\hfill \href{http://www.paololella.it/HSC/Minimal_Hilbert_Functions_and_CM_regularity.html}{\small \tt http://www.paololella.it/HSC/Minimal\_Hilbert\_Functions\_and\_CM\_}\\ \href{http://www.paololella.it/HSC/Minimal_Hilbert_Functions_and_CM_regularity.html}{\small \tt regularity.html}), we notice that the minimal Castelnuovo-Mumford regularity of any projective scheme with Hilbert polynomial $p(t)=15t-31$ is $7$. 
\end{example}

More generally, in the case of an aCM function $f$ with regularity $\rho$ and Hilbert polynomial with {\em odd} degree, we have that the minimal possible Castelnuovo-Mumford regularity of a scheme $X$ with $H_X=f$ is strictly greater than $\rho+1$ (see \cite[Proposition 2.4]{CDG}). If the degree of the Hilbert polynomial is {\em even}, an analogous result does not hold, as the following example shows.

\begin{example}\label{even aCM}
The following strongly-stable ideal 
\[
\begin{split}
I = (&x_6^2, x_5x_6, x_5^2, x_4x_5, x_3x_5, x_2x_5, x_1x_5, x_4^2x_6, x_3x_4x_6, x_2x_4x_6, x_1x_4x_6, x_3^2x_6,x_2x_3x_6, \\ 
&x_1x_3x_6, x_2^3x_6, x_1x_2^2x_6, x_1^2x_2x_6, x_4^4, x_3x_4^3, x_2x_4^3, x_1^4x_6, x_3^3x_4^2, x_3^4x_4, x_3^5) \subset K[x_0,\ldots,x_6],
\end{split}
 \]
where $x_0<x_1<\cdots <x_6$, defines a non-aCM surface $X\subset \mathbb P^6$ with the aCM postulation $H_X=(1,7,21,44,\ldots,6t^2-10t+21,\ldots)$ of regularity $\rho=4$ and the Castelnuovo-Mumford regularity of $X$ is $5=\rho+1$. 
\end{example}


\providecommand{\bysame}{\leavevmode\hbox to3em{\hrulefill}\thinspace}
\providecommand{\MR}{\relax\ifhmode\unskip\space\fi MR }
\providecommand{\MRhref}[2]{%
  \href{http://www.ams.org/mathscinet-getitem?mr=#1}{#2}
}
\providecommand{\href}[2]{#2}

\end{document}